\theoremstyle{plain}
\newenvironment{repthm}[1]
  {\innercustomthm}
  {\endinnercustomthm}
\newenvironment{repcor}[1]
  {\innercustomcor}
  {\endinnercustomthm}
\newtheorem{thm}{Theorem}[section]
\newtheorem{prop}[thm]{Proposition}
\newtheorem{cor}[thm]{Corollary}
\newtheorem{lemma}[thm]{Lemma}
\theoremstyle{definition}
\newtheorem{definition}[thm]{Definition}
\newtheorem{example}[thm]{Example}
\theoremstyle{remark}
\newtheorem{remark}[thm]{Remark}
\newcommand{\RR}{\mathbb{R}}
\newcommand{\AAA}{\mathscr{A}}
\newcommand{\SSS}{\mathfrak{S}}
\newcommand{\des}{\operatorname{des}}
\newcommand{\Ass}{\textsf{Ass}}
\def\tand{\text{ and }}
\def\Tbad{T_{\text{bad}}}
\def\Tgood{T_{\text{good}}}
\def\TU{T_{U}}
\def\TL{T_{L}}
\def\rev{\text{rev}}
\def\DC{\text{DC}}
\def\DP{\text{DP}}
\def\DCw{\textsf{DC}}
\def\DPw{\textsf{DP}}
\def\DCl{\mathfrak{DC}}
\def\DPl{\mathfrak{DP}}
\def\append{\cdot}
\def\FT{\Tilde{F}}
\title{\textbf{An identity involving $h$-polynomials of poset associahedra and type B Narayana polynomials}}
\author{Son Nguyen\footnote{Department of Mathematics, Massachusetts Institute of Technology, Cambridge, MA 02139, USA. Email: \href{mailto:sonnvt@mit.edu}{sonnvt@mit.edu}}}
\date{\vspace{-2em}}
\begin{document}
\ytableausetup{centertableaux}

\maketitle

    \begin{abstract}
    For any finite connected poset $P$, Galashin introduced a simple convex $(|P|-2)$-dimensional polytope $\AAA(P)$ called the poset associahedron. Let $P$ be a poset with a proper autonomous subposet $S$ that is a chain of size $n$. For $1\leq i \leq n$, let $P_i$ be the poset obtained from $P$ by replacing $S$ by an antichain of size $i$. We show that the $h$-polynomial of $\AAA(P)$ can be written in terms of the $h$-polynomials of $\AAA(P_i)$ and type B Narayana polynomials. We then use the identity to deduce several identities involving Narayana polynomials, Eulerian polynomials, and stack-sorting preimages.
    \end{abstract}

\tableofcontents


\section{Introduction}\label{sec:intro}

    For a finite connected poset $P$, Galashin introduced the \textit{poset associahedron} $\AAA(P)$ (see \cite{galashin2021poset}). The faces of $\AAA(P)$ correspond to {\it tubings} of $P$, and the vertices of $\AAA(P)$ correspond to {\it maximal tubings} of $P$;  see Section~\ref{subsec:poset_ass} for the definitions. $\AAA(P)$ can also be described as a compactification of the configuration space of order-preserving maps $P \rightarrow \RR$. A realization of poset associahedra was given by Sack in \cite{sack2023realization}.
    
    Many polytopes can be described as poset associahedra, including permutohedra and associahedra. In particular, when $P$ is the claw poset, i.e. $P$ consists of a unique minimal element $0$ and $n$ pairwise-incomparable elements, then $\AAA(P)$ is the $n$-permutohedron. On the other hand, when $P$ is a chain of $n+1$ elements, i.e. $P = C_{n+1}$, then $\AAA(P)$ is the associahedron $K_{n+1}$.

    For a $d$-dimensional polytope $P$, the $f$-vector of $P$ is the sequence $(f_{0}(P),\ldots,f_{d}(P))$ where $f_i(P)$ is the number of $i$-dimensional faces of $P$. The $f$-polynomial of $P$ is
    \[ f_P(t) = \sum_{i=0}^{d} f_i(P) t^{i}. \]
    For simple polytopes such as poset associahedra, it is often better to consider the smaller and still nonnegative $h$-vector and $h$-polynomial defined by the relation
    \[ f_P(t) = h_P(t+1). \]

    We say $S$ is an \textit{autonomous subposet} of a poset $P$ if  $\text{for all } x, y \in S \tand z \in P-S, \text{ we have }$
    \[ (x \preceq z \Leftrightarrow y \preceq z) \tand (z \preceq x \Leftrightarrow z \preceq y). \]  
    In other words, every element in $P - S$ ``sees'' every element in $S$ the same. A subposet $S$ of $P$ is \textit{proper} if $S \neq P$. It was showed in \cite{nguyen2023poset} that the face numbers of poset associahedra is preserved under the flip operation of autonomous subposet. As a result, the face numbers of poset associahedra only depend on the comparability graph of the poset.

    In this paper, we pursue another question concerning autonomous subposets: what if we replace an autonomous subposet by another poset? It was conjectured in \cite[Conjecture 6.2]{nguyen2023stack} that the answer is particularly nice when we replace an autonomous subposet that is a chain $C_n$ by antichains $A_1,\ldots,A_n$. We will prove this conjecture in this paper.

    The type B Narayana polynomial is defined to be
    \[ B_n(x) = \sum_{k = 0}^{n-1}\binom{n-1}{k}^2x^k. \]
    For each permutation $w$, the cycle type of $w$ is a partition $\lambda(w) = (\lambda_1,\lambda_2,\ldots,\lambda_\ell)$. Then, we define $\ell_w = \ell$ to be the number of cycle in $w$, and
    \[ B_w(x) = B_{\lambda_1}(x)B_{\lambda_2}(x)\ldots B_{\lambda_3}(x). \]

    Our main theorem is the following.

    \begin{repthm}{\ref{thm:recurrence}}
        Let $P$ be a poset with a proper autonomous subposet $S$ that is a chain of size $n$. For $1\leq i \leq n$, let $P_i$ be the poset obtained from $P$ by replacing $S$ by an antichain of size $i$. Let $h_{P}(x)$, $h_{P_1}(x)$, $\ldots$, $h_{P_n}(x)$ be the $h$-polynomials of $\AAA(P)$, $\AAA(P_1)$, $\ldots$, $\AAA(P_n)$, respectively. Then,
        \begin{equation*}
            h_{P}(x) = \dfrac{1}{n!}\sum_{w\in \SSS_n} B_w(x)h_{P_{\ell_w}}(x).
        \end{equation*}
    \end{repthm}

    In particular, when $P$ is a chain $C_{n+1}$, $h_P(x)$ is the Narayana polynomial $N_n(x)$. Also, $P_i$ is the claw poset $A_{1} \oplus A_{i}$, where $\oplus$ denotes the ordinal sum, so $h_{P_i}(x)$ is the Eulerian polynomial $E_i(x)$. Thus, the following corollary is immediate from Theorem \ref{thm:recurrence}.

    \begin{repcor}{\ref{cor:identity1}}
        For all $n$,
        \[ N_n(x) = \dfrac{1}{n!}\sum_{w\in \SSS_n} B_w(x)E_{\ell_w}(x). \]
    \end{repcor}

    The outline of the paper is as follows. In Section \ref{sec:definition}, we will review relevant definitions of face numbers, poset associahedra, graph associahedra, and some families of polynomials. In Section \ref{sec:h-identity}, we will show that the main theorem follows from another identity that does not involve the $h$-vectors:
    \[ \sum_{w\in \SSS_n} t^{\ell_w}G_{w}(x) = \sum_{w\in\SSS_n}t(t+x)\ldots(t+(\ell_w -1)x)\FT_{w}(x). \]
    We refer the reader to Section \ref{subsec:key-polynomials} for the definitions of $G_{w}(x)$ and $\FT_{w}(x)$. Finally, we prove this identity in Section \ref{sec:proof-of-prop} and discuss some corollaries in Section \ref{sec:corollary}.

\vspace{-1em}

\section*{Acknowledgements}

I would like to thank my advisor Vic Reiner for introducing to me this topic and his amazing support. I would like to thank Andrew Sack for trying out many ideas with me. I would like to thank Colin Defant and Pavel Galashin for helpful conversations.

\section{Preliminaries}\label{sec:definition}

\subsection{Polytope and face numbers}\label{subsec:face_numbers}

    A \textit{convex polytope} $P$ is the convex hull of a finite collection of points in $\RR^n$. The \textit{dimension} of a polytope is the dimension of its affine span. A face $F$ of a convex polytope $P$ is the set of points in $P$ where some linear functional achieves its maximum on $P$. Faces that consist of a single point are called \textit{vertices} and $1$-dimensional faces are called \textit{edges} of $P$. A $d$-dimensional polytope $P$ is \textit{simple} if any vertex of $P$ is incident to exactly $d$ edges.

    For a $d$-dimensional polytope $P$, the \textit{face number} $f_i(P)$ is the number of $i$-dimensional faces of $P$. In particular, $f_{0}(P)$ counts the vertices and $f_1(P)$ counts the edges of $P$. The sequence $(f_{0}(P),f_1(P),\ldots,f_{d}(P))$ is called the \textit{$f$-vector} of $P$, and the polynomial
    \[ f_P(t) = \sum_{i=0}^{d} f_i(P) t^{i} \]
    is called the \textit{$f$-polynomial} of $P$. The \textit{$h$-vector} $(h_0(P),\ldots,h_d(P))$ and \textit{$h$-polynomial} $h_P(t) = \sum_{i=0}^dh_i(P)t^i$ are defined by the relation
    \[ f_P(t) = h_P(t+1). \]
    It is well-known that when $P$ is a simple polytope, its $h$-vector is nonnegative and satisfies the Dehn-Sommerville symmetry: $h_i(P) = h_{d-i}(P)$. When the $h$-polynomial is symmetric, recall that it has a unique expansion in terms of (centered) binomials $t^i(1+t)^{d-2i}$ for $0\leq i \leq d/2$. This unique expansion gives the \textit{$\gamma$-vector} $(\gamma_0(P),\ldots,\gamma_{\lfloor\frac{d}{2}\rfloor}(P))$ and \textit{$\gamma$-polynomial} $\gamma_P(t) = \sum_{i = 0}^{\lfloor\frac{d}{2}\rfloor}\gamma_i(P)t^i$ defined by 
    \[ h_P(t) = \sum_{i=0}^{\lfloor\frac{d}{2}\rfloor}\gamma_i(P)t^i(1+t)^{d-2i} = (1+t)^d\gamma_P\left( \dfrac{t}{(1+t)^2} \right).\]
    Note that the $\gamma$-vector may not be nonnegative.

\subsection{Poset associahedra}\label{subsec:poset_ass}

    We start with some poset terminologies.

    \begin{definition}\label{def:poset_def}
        Let $(P,\preceq)$ be a finite poset, and $\tau,\sigma \subseteq P$ be subposets.

        \begin{itemize}
            \item $\tau$ is \textit{connected} if it is connected as an induced subgraph of the Hasse diagram of $P$.

            \item $\tau$ is \textit{convex} if whenever $x,z\in \tau$ and $y \in P$ such that $x\preceq y \preceq z$, then $y\in \tau$.

            \item $\tau$ is a \textit{tube} of $P$ if it is connected and convex. $\tau$ is a \textit{proper tube} if $1 < |\tau| < |P|$.

            \item $\tau$ and $\sigma$ are \textit{nested} if $\tau \subseteq \sigma$ or $\sigma \subseteq \tau$. $\tau$ and $\sigma$ are \textit{disjoint} if $\tau \cap \sigma = \emptyset$.

            \item We say $\sigma \prec \tau$ if $\sigma \cap \tau = \emptyset$, and there exists $x\in \sigma$ and $y\in \tau$ such that $x\preceq y$.

            \item A \textit{tubing} $T$ of $P$ is a set of proper tubes such that any pair of tubes in $T$ is either nested or disjoint, and there is no subset $\{\tau_1,\tau_2,\ldots,\tau_k\}\subseteq T$ such that $\tau_1 \prec \tau_2 \prec \ldots \prec \tau_k \prec \tau_1$. We will refer to the latter condition as the \textit{acyclic condition}.

            \item A tubing $T$ is \textit{maximal} if it is maximal under inclusion, i.e. $T$ is not a proper subset of any other tubing.
        \end{itemize}
    \end{definition}

    \begin{example}
        Figure \ref{fig:tubingEx} shows examples and non-examples of tubings of posets. Note that the right-most example in Figure \ref{subfig:tubingNonEx} is a non-example since it violates the acyclic condition. In particular, if we label the tubes from right to left as $\tau_1,\tau_2,\tau_3$, then we have $\tau_1 \prec \tau_2 \prec \tau_3 \prec \tau_1$.

        \begin{figure}[h!]
         \centering
            \begin{subfigure}[b]{0.45\textwidth}
                \centering
                \includegraphics[scale = 0.5]{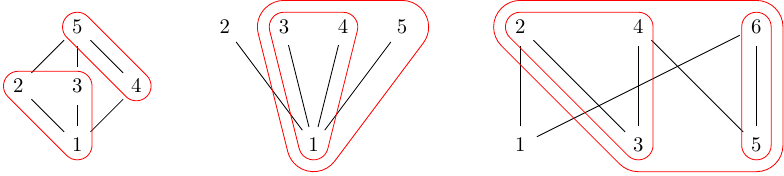}
                \caption{Examples}
                \label{subfig:tubingEx}
            \end{subfigure}
         \quad
            \begin{subfigure}[b]{0.45\textwidth}
                \centering
                \includegraphics[scale = 0.5]{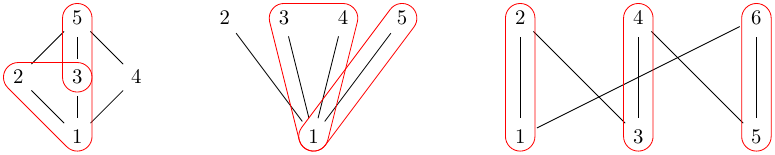}
                \caption{Non-examples}
                \label{subfig:tubingNonEx}
            \end{subfigure}
    
            \caption{Examples and non-examples of tubings of posets}
            \label{fig:tubingEx}
        \end{figure}

    \vspace{-1em}
    \end{example}

    \begin{definition}[{\cite[Theorem 1.2]{galashin2021poset}}]
        For a finite connected poset $P$, there exists a simple, convex polytope $\AAA(P)$ of dimension $|P|-2$ whose face lattice is isomorphic to the set of tubings ordered by reverse inclusion. The faces of $\AAA(P)$ correspond to tubings of $P$, and the vertices of $\AAA(P)$ correspond to maximal tubings of $P$. This polytope is called the \textbf{poset associahedron} of $P$.
    \end{definition}

    \begin{example}
        Examples of poset associahedra can be seen in Figure \ref{fig:posetAssEx}. In particular, if $P$ is a claw, i.e. $P$ consists of a unique minimal element $0$ and $n$ pairwise-incomparable elements as shown in Figure \ref{subfig:posetExPerm}, $\AAA(P)$ is a permutohedron. If $P$ is a chain, $\AAA(P)$ is an associahedron.

        \begin{figure}[h!]
         \centering
            \begin{subfigure}[c]{0.45\textwidth}
                \centering
                \includegraphics[scale = 0.3]{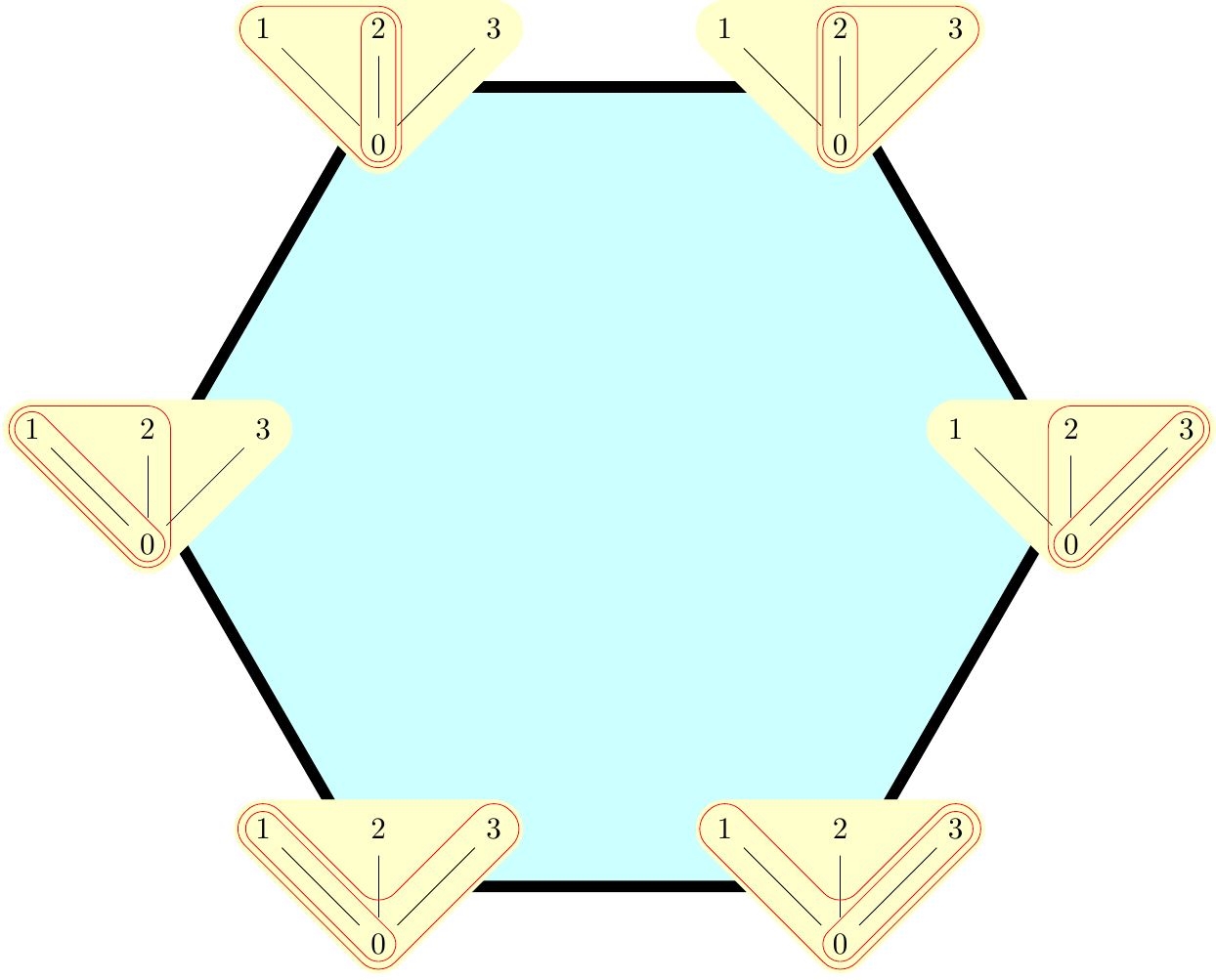}
                \caption{Permutohedron}
                \label{subfig:posetExPerm}
            \end{subfigure}
         \quad
            \begin{subfigure}[c]{0.45\textwidth}
                \centering
                \includegraphics[scale = 0.3]{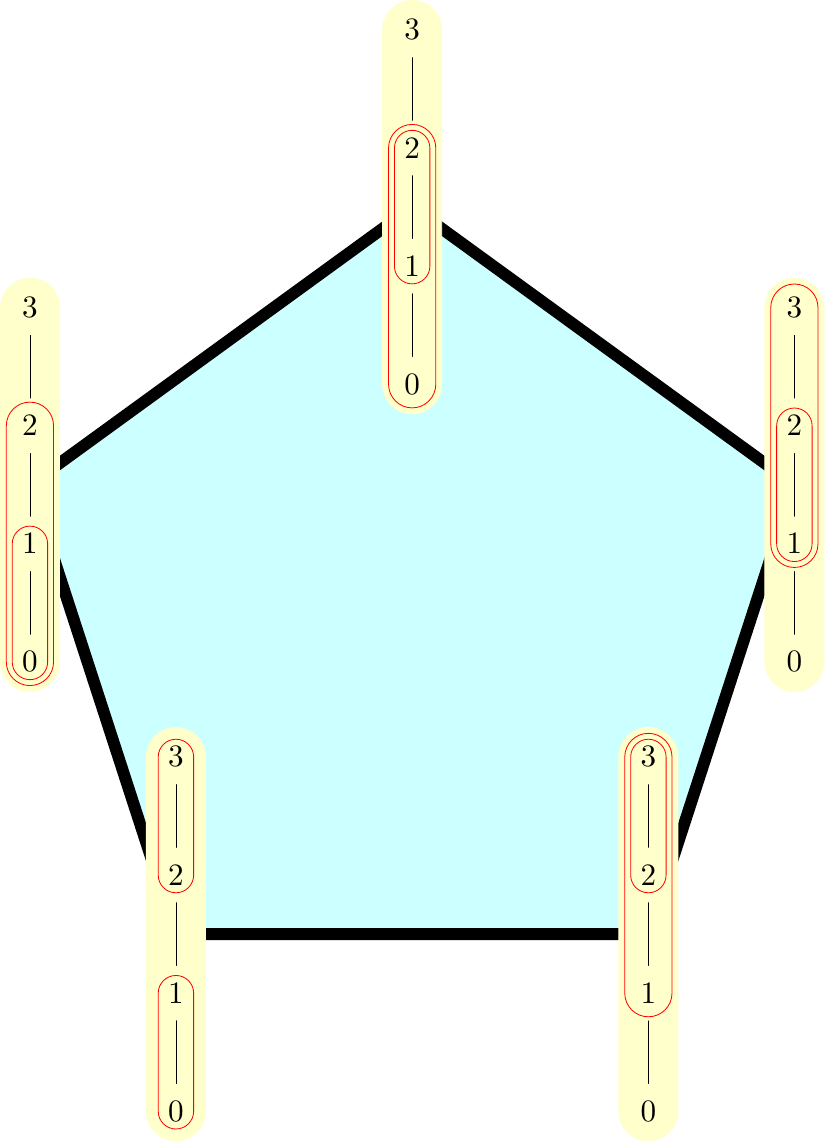}
                \caption{Associahedron}
                \label{subfig:posetExAss}
            \end{subfigure}
    
            \caption{Permutohedron and associahedron as poset associahedra}
            \label{fig:posetAssEx}
        \end{figure}
    \end{example}

\subsection{Graph associahedra}\label{subsec:graph_ass}

    Graph associahedra are generalized permutohedra arising as special cases of nestohedra. We refer the readers to \cite{postnikov2006faces} for a comprehensive study of face numbers of generalized permutohedra and nestohedra.

    \begin{definition}\label{def:graph_def}
        Let $G = (V,E)$ be a graph, and $\tau,\sigma\subseteq V$ be subsets of vertices.

        \begin{itemize}
            \item $\tau$ is a \textit{tube} of $G$ if $\tau \neq V$ and it induces a connected subgraph of $G$.
            
            \item $\tau$ and $\sigma$ are \textit{nested} if $\tau \subseteq \sigma$ or $\sigma \subseteq \tau$. $\tau$ and $\sigma$ are \textit{disjoint} if $\tau \cap \sigma = \emptyset$.

            \item $\tau$ and $\sigma$ are \textit{compatible} if they are nested or they are disjoint and $\tau \cup \sigma$ is not a tube.

            \item A \textit{tubing} $T$ of $G$ is a set of pairwise compatible tubes.

            \item A tubing $T$ is \textit{maximal} if it is maximal by inclusion, i.e. $T$ is not a proper subset of any other tubing.
        \end{itemize}
    \end{definition}

    Figure \ref{fig:graphTubingEx} shows examples and non-examples of tubings of graphs. Note that the left-most example in Figure \ref{subfig:graphTubingNonEx} is a non-example since the tubes $\{1\}$ and $\{4\}$ are disjoint yet their union $\{1,4\}$ is still a tube. The same reason applies for the right-most example.

    \begin{figure}[h!]
     \centering
        \begin{subfigure}[b]{0.45\textwidth}
            \centering
            \includegraphics[scale = 0.5]{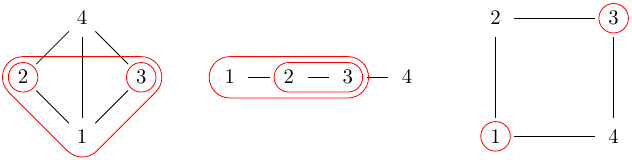}
            \caption{Examples}
            \label{subfig:graphTubingEx}
        \end{subfigure}
     \quad
        \begin{subfigure}[b]{0.45\textwidth}
            \centering
            \includegraphics[scale = 0.5]{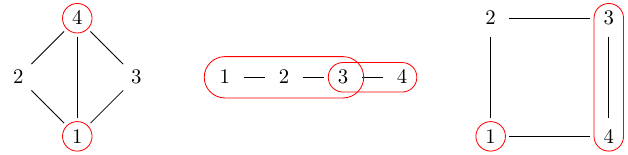}
            \caption{Non-examples}
            \label{subfig:graphTubingNonEx}
        \end{subfigure}

        \caption{Examples and non-examples of tubings of graphs}
        \label{fig:graphTubingEx}
    \end{figure}

    \begin{definition}
        For a connected graph $G = (V,E)$, the \textbf{graph associahedron} of $G$ is a simple, convex polytope $\Ass(G)$ of dimension $|V|-1$ whose face lattice is isomorphic to the set of tubings ordered by reverse inclusion. The faces of $\Ass(G)$ correspond to tubings of $G$, and the vertices of $\Ass(G)$ correspond to maximal tubings of $G$.
    \end{definition}

    Examples of graph associahedra can be seen in Figure \ref{fig:graphAssEx}. In particular, if $G$ is a complete graph, $\Ass(G)$ is a permutohedron. If $G$ is a path graph, $\Ass(G)$ is an associahedron.

    \begin{figure}[h!]
     \centering
        \begin{subfigure}[c]{0.45\textwidth}
            \centering
            \includegraphics[scale = 0.3]{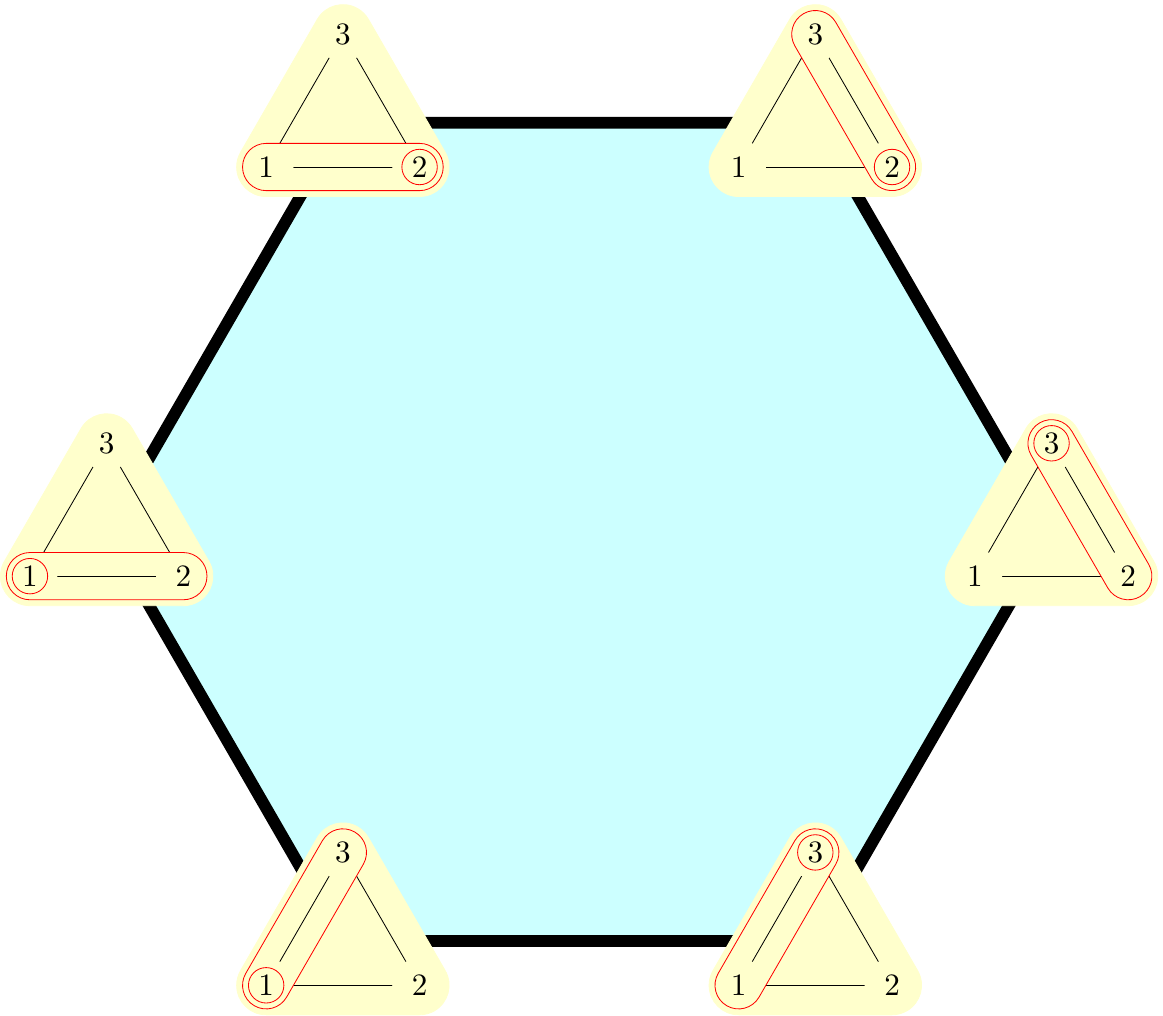}
            \caption{Permutohedron}
            \label{subfig:graphExPerm}
        \end{subfigure}
     \quad
        \begin{subfigure}[c]{0.45\textwidth}
            \centering
            \includegraphics[scale = 0.3]{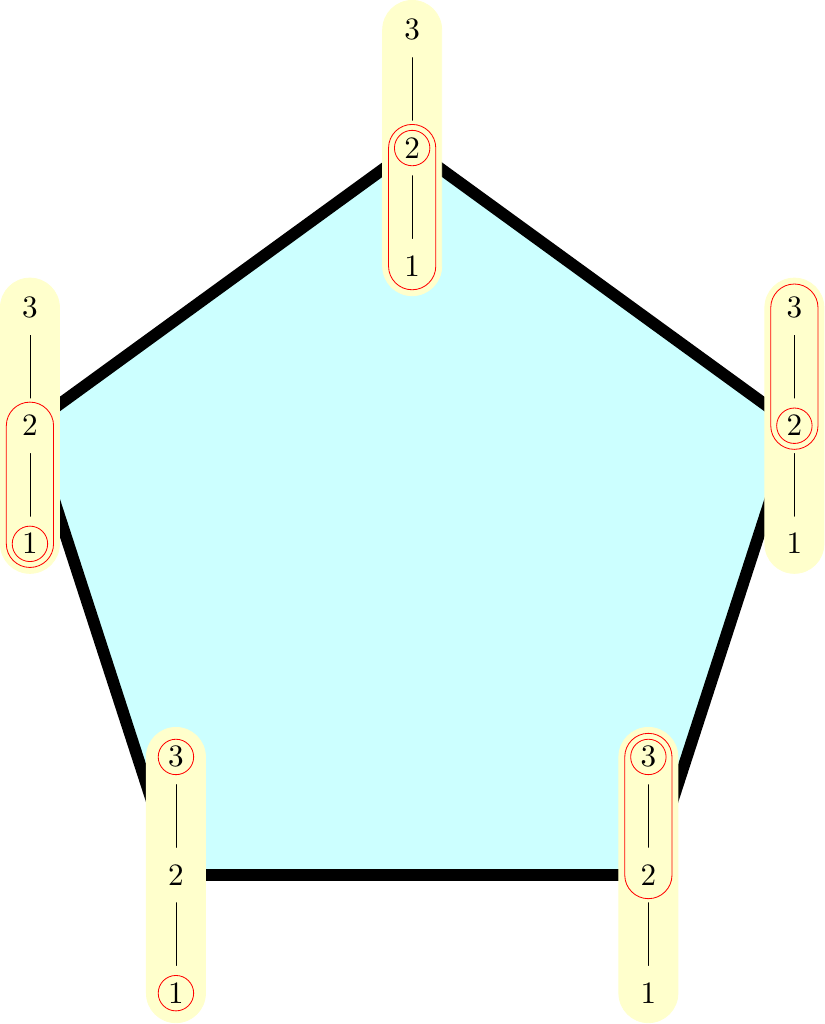}
            \caption{Associahedron}
            \label{subfig:graphExAss}
        \end{subfigure}

        \caption{Permutohedron and associahedron as graph associahedra}
        \label{fig:graphAssEx}
    \end{figure}

    \begin{remark}
        In Section \ref{sec:proof-of-prop}, we will work with tubings of directed graphs. When constructing tubings for directed graphs, we will ignore the directions of the edges and treat the graphs as undirected.
    \end{remark}

\subsection{Polynomials}\label{subsec:key-polynomials}

    Let us now introduce some relevant polynomials. The (type A) Narayana polynomial is defined to be
    \[ N_n(x) = \sum_{k = 0}^{n-1}\dfrac{1}{n}\binom{n}{k}\binom{n}{k+1}x^k. \]
    For example, we have
    \begin{align*}
        N_1(x) &= 1, \\
        N_2(x) &= 1 + x, \\
        N_3(x) &= 1 + 3x + x^2, \\
        N_4(x) &= 1 + 6x + 6x^2 + x^3.
    \end{align*}
    It is well-known that Narayana polynomials give the $h$-vectors of the classical associahedra. Recall that the classical associahedra is also the graph associahedra of path graphs. The corresponding $f$-vectors are
    \[ F_n(x) = N_n(x+1). \]
    For example, we have
    \begin{align*}
        F_1(x) &= 1, \\
        F_2(x) &= 2 + x, \\
        F_3(x) &= 5 + 5x + x^2, \\
        F_4(x) &= 14 + 21x + 9x^2 + x^3.
    \end{align*}
    We also define
    \[ \FT_n(x) = nF_{n-1}(x), \]
    with the convention that $F_0(x) = 1$. For example, we have
    \begin{align*}
        \FT_1(x) &= 1, \\
        \FT_2(x) &= 2, \\
        \FT_3(x) &= 6 + 3x, \\
        \FT_4(x) &= 20 + 20x + 4x^2.
    \end{align*}
    Similarly, the type B Narayana polynomial is defined to be
    \[ B_n(x) = \sum_{k = 0}^{n-1}\binom{n-1}{k}^2x^k. \]
    For example, we have
    \begin{align*}
        B_1(x) &= 1, \\
        B_2(x) &= 1 + x, \\
        B_3(x) &= 1 + 4x + x^2, \\
        B_4(x) &= 1 + 9x + 9x^2 + x^3.
    \end{align*}
    The type B Narayana polynomials show up as the rank-generating function of the type B analogue $\text{NC}^B_n$ of the lattice of non-crossing partitions (see \cite{reiner1997non}) and the $h$-polynomials of type B associahedra (see \cite{simion2003type}). Notably, type B associahedra are also graph associahedra of cycle graphs (see \cite{postnikov2006faces}). The sum of the coefficients in $B_{n+1}(x)$ is $\binom{2n}{n}$, which is called type B Catalan number. The corresponding $f$-vectors of type B associahedra are
    \[ G_n(x) = B_n(x+1). \]
    For example, we have
    \begin{align*}
        G_1(x) &= 1, \\
        G_2(x) &= 2 + x, \\
        G_3(x) &= 6 + 6x + x^2, \\
        G_4(x) &= 20 + 30x + 12x^2 + x^3.
    \end{align*}
    For each family of polynomials $\{P_n(x)\}$, we define $P^\rev_n(x)$ to be the palindromic polynomial
    \[ P^\rev_n(x) = x^{n-1}P_n\left(\dfrac{1}{x}\right). \]
    We will make use of these palindromic polynomials because in the case of $\{F_n(x)\}$ and $\{G_n(x)\}$, $\{F^\rev_n(x)\}$ and $\{G^\rev_n(x)\}$ count tubings by the number of tubes.
    
    In addition, for each family of polynomials $\{P_n(x)\}$, and each partition $\lambda= (\lambda_1, \lambda_2,\ldots,\lambda_\ell)$, we define
    \[ P_{\lambda}(x) = P_{\lambda_1}(x)P_{\lambda_2}(x)\ldots P_{\lambda_\ell}(x). \]
    For example, we have
    \begin{align*}
        N_{(4,2,1)}(x) &= (1 + 6x + 6x^2 + x^3)(1 + x)(1), \\
        F_{(4,2,1)}(x) &= (14 + 21x + 9x^2 + x^3)(2 + x)(1), \\
        \FT_{(4,2,1)}(x) &= (20 + 20x + 4x^2)(2)(1), \\
        B_{(4,2,1)}(x) &= (1 + 9x + 9x^2 + x^3)(1 + x)(1), \\
        G_{(4,2,1)}(x) &= (20 + 30x + 12x^2 + x^3)(2 + x)(1).
    \end{align*}
    For each permutation $w$, the cycle type of $w$ is a partition $\lambda(w)$, and the number of cycles in $w$ is $\ell_w = \ell(\lambda(w))$. We abuse notation and define
    \[ P_w(x) = P_{\lambda(w)}(x) \]
    for each family of polynomials $\{P_n(x)\}$. Note that this means $P_{w_1}$ and $P_{w_2}$ are the same if $w_1$ and $w_2$ are in the same conjugacy class.

    Finally, we denote by $s_{n,k}$ the \textit{unsigned Stirling number of the first kind}, which counts the number of permutations of $\SSS_n$ with $k$ cycles. Note that $s_{n,k}$ is the coefficient of $x^k$ in $x(x+1)\ldots(x+n-1)$, or equivalently the coefficient of $x^{n-k}$ in $1(1+x)\ldots(1 + (n-1)x)$.

\section{The main theorem}\label{sec:h-identity}
    
    Recall that for a poset $P$, a subposet $S$ of $P$ is called \emph{autonomous} if $\text{for all } x, y \in S \tand z \in P-S, \text{ we have }$
    \[ (x \preceq z \Leftrightarrow y \preceq z) \tand (z \preceq x \Leftrightarrow z \preceq y). \]
    A subposet $S$ of $P$ is \textit{proper} if $S \neq P$. Our main theorem is the following.
    \begin{thm}\label{thm:recurrence}
        Let $P$ be a poset with a proper autonomous subposet $S$ that is a chain of size $n$. For $1\leq i \leq n$, let $P_i$ be the poset obtained from $P$ by replacing $S$ by an antichain of size $i$. Let $h_{P}(x)$, $h_{P_1}(x)$, $\ldots$, $h_{P_n}(x)$ be the $h$-polynomials of $\AAA(P)$, $\AAA(P_1)$, $\ldots$, $\AAA(P_n)$, respectively. Then,
        \begin{equation}\label{eqn:recurrence}
            h_{P}(x) = \dfrac{1}{n!}\sum_{w\in \SSS_n} B_w(x)h_{P_{\ell_w}}(x).
        \end{equation}
    \end{thm}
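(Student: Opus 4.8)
The plan is to pass to $f$-polynomials, where tubings are counted directly, and to isolate all of the symmetric-group combinatorics into the single polynomial identity $(\star)$ flagged in the introduction. Since $f_Q(t)=h_Q(t+1)$ for every poset associahedron, and since $G_n(x)=B_n(x+1)$ (so $G_w(x)=B_w(x+1)$), the identity \eqref{eqn:recurrence} is equivalent, after the substitution $x\mapsto x-1$, to the $f$-polynomial identity
\[
f_P(x)=\frac{1}{n!}\sum_{w\in\SSS_n}G_w(x)\,f_{P_{\ell_w}}(x).
\]
First I would verify this equivalence (a one-line change of variables using $G_w(x-1)=B_w(x)$ and $f_{Q}(x-1)=h_Q(x)$), so that it suffices to prove the displayed $f$-identity.

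To connect this to $(\star)$, introduce the $\RR[x]$-linear ``umbral'' map $\varphi_x$ on $\RR[x][t]$ determined by $\varphi_x(t^i)=f_{P_i}(x)$ for $1\le i\le n$; this is well defined on every term occurring in $(\star)$, because $t^{\ell_w}$ and $\prod_{j=0}^{\ell_w-1}(t+jx)=t\prod_{j=1}^{\ell_w-1}(t+jx)$ both involve only the powers $t^1,\dots,t^n$. Applying $\varphi_x$ to both sides of the (separately established) identity $(\star)$ yields
\[
\sum_{w\in\SSS_n}G_w(x)\,f_{P_{\ell_w}}(x)=\sum_{w\in\SSS_n}\FT_w(x)\,\varphi_x\!\Big(\prod_{j=0}^{\ell_w-1}(t+jx)\Big),
\]
so the theorem reduces to proving, by a direct count of tubings, that the right-hand side equals $n!\,f_P(x)$.

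The heart of the argument, and the step I expect to be the main obstacle, is this direct enumeration. I would stratify the tubings of $\AAA(P)$ by how they meet the autonomous chain $S=\{c_1\prec\cdots\prec c_n\}$. Because $S$ is a convex autonomous chain, a tube meeting both $S$ and $P-S$ is forced (by convexity together with the fact that each outside element lies below all of $S$, above all of $S$, or incomparable to all of $S$) to contain a prefix $\{c_1,\dots,c_j\}$ or a suffix $\{c_k,\dots,c_n\}$ of the chain, while the tubes contained in $S$ are exactly its subintervals. Thus the ``internal'' data along $S$ is the tubing data of a \emph{chain}, governed by the type A associahedra — this is the source of the path-associahedron polynomials $F_{k-1}$, and the extra factor $k$ in $\FT_k=kF_{k-1}$ records the choice of how a segment attaches to the rest of $P$ (a rooting/cut point). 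The chain is cut into segments that merge outward; collapsing these segments shows that their interaction with $P-S$ is a tubing of $P_i$, where $i$ is the number of segments, contributing $f_{P_i}(x)=\varphi_x(t^i)$, while the rising factorial $\prod_{j=0}^{\ell_w-1}(t+jx)$ bookkeeps the admissible relative orderings of the segments and the tubes enclosing them. Writing the resulting count in symmetrized form via $\#\{w:\lambda(w)=\lambda\}=n!/z_\lambda$ and the Stirling-number expansion of the rising factorial (to match cycle types) gives precisely $\frac1{n!}\sum_w \FT_w(x)\,\varphi_x(\prod_{j}(t+jx))$. Establishing this bijection and tracking the weight $x^{\#\text{tubes}}$ correctly — in particular showing that the segment/ordering data assembles into exactly the rising factorial and that the collapse respects the acyclic condition on tubings — is the technical core.

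Finally, combining the enumeration with the image of $(\star)$ under $\varphi_x$ gives $f_P(x)=\frac1{n!}\sum_w G_w(x)f_{P_{\ell_w}}(x)$, and substituting $x\mapsto x-1$ returns \eqref{eqn:recurrence}. Notably, the type B (cyclohedron) polynomials $B_w$ never appear in the direct count — the chain $S$ only ever produces type A data — so the role of $(\star)$ is precisely to repackage the ``linear'' path enumeration into the ``cyclic'' cyclohedron form demanded by the statement.
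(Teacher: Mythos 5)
Your reduction is sound and is, in substance, the paper's own: passing to $f$-polynomials via $f_Q(x)=h_Q(x+1)$, $G_w(x)=B_w(x+1)$ is exactly how Proposition \ref{prop:reduce} converts \eqref{eqn:recurrence} into a statement about \eqref{eqn:better_identity}, and your umbral map $\varphi_x(t^i)=f_{P_i}(x)$ is a clean repackaging of the paper's coefficient-of-$t^k$ comparison (the identity $\varphi_x\bigl(t(t+x)\cdots(t+(\ell-1)x)\bigr)=\sum_k s_{\ell,k}x^{\ell-k}f_{P_k}(x)$ is precisely the inner sum in Lemma \ref{lem:f-sum}). Treating $(\star)$, i.e.\ Proposition \ref{prop:better_identity}, as separately established also matches the paper's organization, since it is proved independently in Section \ref{sec:proof-of-prop} by the directed-cycle/directed-path bijection. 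So the theorem correctly reduces, in your setup, to showing $n!\,f_P(x)=\sum_{w\in\SSS_n}\FT_w(x)\,\varphi_x\bigl(\prod_{j=0}^{\ell_w-1}(t+jx)\bigr)$, which is Lemma \ref{lem:f-sum}.

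The genuine gap is in the ``direct enumeration'' you flag as the technical core, and the sketch you give of it is wrong as stated. Collapsing the maximal degrading tubes and lonely elements of $S$ does \emph{not} yield a tubing of the antichain poset $P_i$; it yields a \emph{non-degradable} tubing of $P'$, the poset with $S$ replaced by the shorter chain $C_\ell$ (Lemma \ref{lem:degradable}). These are genuinely different objects: a bad tube of $P'$ must swallow a prefix or a suffix of the surviving chain (the lower/upper dichotomy), whereas a tube of $P_k$ may swallow an arbitrary subset of the antichain, so the ``interaction with $P-S$'' is order-constrained and does not biject with tubings of any single $P_i$. The bridge is the paper's main Lemma \ref{lem:non-degradable}: $n!\,t_k=\sum_{i=1}^n s_{n,i}\,t_{i,k}$, an equality of \emph{weighted} counts proved by a bijection $(w,T)\leftrightarrow(\omega,T')$ built on the decomposition of $\Tbad$ into $(\mathcal L,\mathcal M,\mathcal U)$ from \cite{nguyen2023poset}, which in turn reduces to an explicit bijection between pairs (permutation $w\in\SSS_n$, composition of $n$ into $k$ parts) and pairs (permutation $\omega$ with $\ell$ cycles, ordered set partition of $\{1,\dots,\ell\}$ into $k$ blocks). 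Your rising factorial is exactly the Stirling generating function $\sum_k s_{\ell,k}t^kx^{\ell-k}$ arising from this lemma; it does not merely ``bookkeep admissible relative orderings of the segments,'' and nothing in your sketch produces the factor $n!$ on the left or explains why the multiplicities are Stirling numbers of the first kind. (Relatedly, your reading of the factor $k$ in $\FT_k=kF_{k-1}$ as a rooting choice belongs to the proof of $(\star)$ itself, where cycles are cut into $\lambda_1\cdots\lambda_\ell$ paths, not to this reduction; in Lemma \ref{lem:f-sum} the factor $\lambda_1\cdots\lambda_\ell$ appears algebraically when converting composition counts into the number of permutations of cycle type $\lambda$.) Without Lemma \ref{lem:non-degradable} --- a missing idea, not a routine verification --- your argument does not close.
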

    For example, when $n = 2$, we have
    \[ h_P(x) = \dfrac{1}{2}\left( h_{P_2}(x) + (1+x)h_{P_1}(x) \right). \]
    We will show that Theorem \ref{thm:recurrence} follows from the following proposition.
    \begin{prop}\label{prop:better_identity}
        For all $n$,
        \begin{equation}\label{eqn:better_identity}
            \sum_{w\in \SSS_n} t^{\ell_w}G_{w}(x) = \sum_{w\in\SSS_n}t(t+x)\ldots(t+(\ell_w -1)x)\FT_{w}(x).
        \end{equation}
    \end{prop}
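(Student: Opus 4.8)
The plan is to encode both sides of \eqref{eqn:better_identity} as exponential generating functions in an auxiliary variable $z$, exploiting that every weight appearing is (up to a correction depending only on $\ell_w$) a product over the cycles of $w$. Writing the $\SSS_n$-sums by cycle type and applying the exponential formula—a family of per-cycle weights $f_k$ produces the EGF $\exp\bigl(\sum_{k\ge1}f_k z^k/k\bigr)$—the left-hand side is immediate, since a cycle of length $k$ carries weight $t\,G_k(x)$:
\[ \sum_{n\ge 0}\frac{z^n}{n!}\sum_{w\in\SSS_n}t^{\ell_w}G_w(x)=\exp\Bigl(t\sum_{k\ge 1}G_k(x)\frac{z^k}{k}\Bigr). \]
First I would set up this dictionary carefully, as it governs the whole computation.

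For the right-hand side the rising factorial must be absorbed. Using the Stirling expansion recorded in Section~\ref{subsec:key-polynomials}, namely $t(t+x)\cdots(t+(\ell-1)x)=\sum_k s_{\ell,k}t^k x^{\ell-k}$, together with the fact that $s_{\ell,k}$ counts permutations $\sigma\in\SSS_\ell$ with $k$ cycles, I would reinterpret each summand as a pair: a permutation $w\in\SSS_n$ together with a permutation $\sigma$ of the $\ell_w$ cycles of $w$, weighted by $t^{\ell_\sigma}x^{\ell_w-\ell_\sigma}\FT_w(x)$, where $\ell_\sigma$ is the number of cycles of $\sigma$. Grouping the cycles of $w$ into the cycles of $\sigma$, a single cycle of $\sigma$ bundling $m$ cycles of $w$ contributes exactly $t\,x^{m-1}$ times the product of their $\FT$-weights, so the weight factors over the cycles of $\sigma$. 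A second application of the exponential formula (first building cycles of $w$, then bundling them into supercycles) then gives, with $a(z):=\sum_{k\ge1}\FT_k(x)z^k/k=zF(z)$ (using $\FT_k=kF_{k-1}$ and $F(z):=\sum_{n\ge0}F_n(x)z^n$),
\[ \sum_{n\ge0}\frac{z^n}{n!}\sum_{w\in\SSS_n}t(t+x)\cdots(t+(\ell_w-1)x)\,\FT_w(x)=\exp\Bigl(-\tfrac{t}{x}\log\bigl(1-x\,a(z)\bigr)\Bigr)=\bigl(1-xzF(z)\bigr)^{-t/x}. \]

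Equating the two generating functions as identities in $t$ reduces \eqref{eqn:better_identity} to the single relation $\sum_{k\ge1}G_k(x)\frac{z^k}{k}=-\tfrac1x\log\bigl(1-xzF(z)\bigr)$; differentiating in $z$ and clearing denominators turns this into the clean algebraic identity
\[ G(z)\bigl(1-xzF(z)\bigr)=z\,\frac{d}{dz}\bigl(zF(z)\bigr),\qquad G(z):=\sum_{n\ge1}G_n(x)z^n. \]
To prove this I would substitute $q=x+1$ and invoke the quadratic functional equation $\nu=z(1+\nu)(1+q\nu)$ satisfied by the Narayana generating function $\nu(z):=\sum_{n\ge1}N_n(q)z^n=F(z)-1$, which is checked directly from the coefficients $N_n(q)$. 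Expressing $zF$ and its derivative through $\nu$ via this functional equation collapses the displayed identity to the compact statement
\[ \sum_{n\ge1}B_n(q)z^n=\frac{\nu}{1-q\nu^2}, \]
so that the type B Narayana generating function is recovered from the type A one by this single substitution.

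The main obstacle is twofold. First, the two-level exponential-formula step on the right-hand side—checking that the global weight $t^{\ell_\sigma}x^{\ell_w-\ell_\sigma}$ factors over the cycles of $\sigma$ as $\prod_i t\,x^{m_i-1}$, thereby converting the rising factorial into a local per-supercycle weight—must be justified with care, since it is what makes the compositional exponential formula applicable. Second, and more substantively, one must establish the Narayana identity $\sum_n B_n(q)z^n=\nu/(1-q\nu^2)$. I would attack this by Lagrange inversion applied to $\nu=z(1+\nu)(1+q\nu)$: writing $\nu/(1-q\nu^2)=\sum_{j\ge0}q^j\nu^{2j+1}$ and extracting $[z^n]$ reduces the claim to the binomial identity $\sum_{j}\frac{2j+1}{n}\binom{n}{j+k+1}\binom{n}{k-j}=\binom{n-1}{k}^2$, verifiable by standard hypergeometric summation. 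Alternatively—and more in keeping with the tubing framework developed in the paper—the identity $\sum_n B_n(q)z^n=\nu/(1-q\nu^2)$ ought to admit a bijective proof relating tubings of cycle graphs to tubings of path graphs, which would simultaneously supply the combinatorial heart of Proposition~\ref{prop:better_identity}.
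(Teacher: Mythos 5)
Your proposal is correct in substance but follows a genuinely different route from the paper. After the Stirling expansion $t(t+x)\cdots(t+(\ell-1)x)=\sum_k s_{\ell,k}t^kx^{\ell-k}$ --- which the paper also exploits, in the proof of Lemma \ref{lem:RHS-count} --- you go analytic: two applications of the exponential formula give $\exp\bigl(t\sum_{k\ge1}G_k(x)z^k/k\bigr)$ for the left side of \eqref{eqn:better_identity} and $\bigl(1-xzF(z)\bigr)^{-t/x}$ for the right, and comparing coefficients of $t$ reduces the proposition to the single relation $G(z)\bigl(1-xzF(z)\bigr)=z\frac{d}{dz}\bigl(zF(z)\bigr)$. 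I checked your functional-equation step: with $\nu=z(1+\nu)(1+q\nu)$ and $q=x+1$ one gets $1-(q-1)zF(z)=(1+\nu)/(1+q\nu)$ and $z\nu'=\nu(1+\nu)(1+q\nu)/(1-q\nu^2)$, so the relation does collapse exactly to $\sum_{n\ge1}B_n(q)z^n=\nu/(1-q\nu^2)$; that identity is classical and closable either from the closed form $\sum_{m\ge0}\sum_k\binom{m}{k}^2q^kz^m=\bigl((1-(1+q)z)^2-4qz^2\bigr)^{-1/2}$ or from your Lagrange-inversion reduction to $\sum_j\frac{2j+1}{n}\binom{n}{k+j+1}\binom{n}{k-j}=\binom{n-1}{k}^2$, which is Zeilberger-certifiable (e.g.\ at $n=3$, $k=1$ it reads $3+1=\binom{2}{1}^2$). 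The paper instead proceeds bijectively: it reverses \eqref{eqn:better_identity} into \eqref{eqn:reverse-identity} so that $G^\rev_w$ and $\FT^\rev_w$ count tubings, interprets the left side as tubings of disjoint unions of labeled directed cycles (Lemma \ref{lem:LHS-count}) and the right side as pairs of a bottom-excluding tubing of a disjoint union of directed paths together with a permutation $\sigma$ bundling paths into cycles (Lemma \ref{lem:RHS-count}), and matches the two by deleting the edge entering each lonely vertex (Proposition \ref{prop:DC-DP-biject}). Your supercycles are precisely the generating-function shadow of that gluing, and the bijective proof of $B(z)=\nu/(1-q\nu^2)$ that you conjecture at the end is, in bundled form, exactly the paper's cycle-to-path tubing bijection. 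What your route buys is brevity, mechanical verifiability, and the isolation of the type A--to--type B relation as the one nontrivial input; what the paper's route buys is a self-contained combinatorial argument with no functional equations or hypergeometric summation, which moreover explains structurally why cycle graphs (type B) and path graphs (type A) appear. The two points you flag --- identifying permutations of the canonically ordered cycles of $w$ with cycles-of-cycles structures so that the compositional exponential formula applies, and the binomial summation --- do need to be written out, but both are routine, so I see no substantive gap.
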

    \begin{example}
        For $n = 3$, the LHS of (\ref{eqn:better_identity}) is
        \[ t^3 + 3t^2(x+2) + 2t(x^2 + 6x + 6), \]
        and the RHS is
        \[ t(t+x)(t+2x) + 3t(t+x)(2) + 2t(3x+6). \]
        One can check that they are equal.
    \end{example}
    \begin{prop}\label{prop:reduce}
        Theorem \ref{thm:recurrence} follows from Proposition \ref{prop:better_identity}.
    \end{prop}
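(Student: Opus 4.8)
Throughout, write $f_Q(t)=h_Q(t+1)$, so $h_Q(x)=f_Q(x-1)$, for each of the posets $Q\in\{P,P_1,\dots,P_n\}$; and recall $G_m(x)=B_m(x+1)$, whence $B_w(x)=\prod_i B_{\lambda_i}(x)=\prod_i G_{\lambda_i}(x-1)=G_w(x-1)$. Applying the substitution $x\mapsto x-1$ therefore turns the claimed identity \eqref{eqn:recurrence} into the equivalent statement about $f$-polynomials
\[ f_P(t)=\frac1{n!}\sum_{w\in\SSS_n}G_w(t)\,f_{P_{\ell_w}}(t), \]
which I will call $(\dagger)$. Since every quantity here is a polynomial, it suffices to prove $(\dagger)$.

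The plan is first to prove, directly from the face structure of $\AAA(P)$, a companion identity in which only the \emph{type A} polynomials $\FT_w$ appear, and then to convert it into $(\dagger)$ by means of Proposition~\ref{prop:better_identity}. Recall that $f_P(t)=\sum_T t^{\,|P|-2-|T|}$, the sum being over all tubings $T$ of $P$. Because $S$ is an autonomous chain, every tube meets $S$ in an interval of the chain (by convexity), and the family of these intervals, together with the nesting/disjointness and acyclicity conditions, is a laminar family on $S$; such families are exactly the faces of a type A associahedron. Contracting the top-level blocks of this laminar family collapses $S$ to an antichain and produces a tubing of $P_k$, where $k$ is the number of blocks. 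Organizing the count in this way — the internal chain structure contributing associahedron $f$-polynomials $F_{i-1}$ (equivalently $\FT_i=iF_{i-1}$), the block count dictating which $P_k$ is glued in, and the ways blocks may nest or merge contributing the step-$x$ rising factorials — I expect to obtain the geometric lemma
\[ f_P(x)=\frac1{n!}\sum_{w\in\SSS_n}\FT_w(x)\sum_{k\ge1}s_{\ell_w,k}\,x^{\ell_w-k}\,f_{P_k}(x), \]
equivalently $f_P(x)=\tfrac1{n!}\,L\!\left(\sum_{w\in\SSS_n}\prod_{j=0}^{\ell_w-1}(t+jx)\,\FT_w(x)\right)$, where $L$ is the $\QQ[x]$-linear operator defined on positive powers of $t$ by $L(t^k)=f_{P_k}(x)$ and I have used $\prod_{j=0}^{\ell-1}(t+jx)=\sum_k s_{\ell,k}\,t^k x^{\ell-k}$. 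Proving this geometric lemma — in particular handling the tubes that straddle $S$ and the complement $P\setminus S$, and checking that the apparent cyclic (type B) freedom collapses to this type A count with the rising-factorial merging weight — is the step I expect to be the main obstacle.

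Granting the geometric lemma, the rest is formal. Apply $L$ to the identity of Proposition~\ref{prop:better_identity}. On its right-hand side $L$ reproduces exactly the bracket in the geometric lemma, so $\tfrac1{n!}L(\cdot)=f_P(x)$; on its left-hand side, since $G_w(x)$ is a scalar for $L$ and $L(t^{\ell_w})=f_{P_{\ell_w}}(x)$,
\[ \frac1{n!}\,L\!\left(\sum_{w\in\SSS_n}t^{\ell_w}G_w(x)\right)=\frac1{n!}\sum_{w\in\SSS_n}G_w(x)\,f_{P_{\ell_w}}(x). \]
Equating the two images yields $f_P(x)=\tfrac1{n!}\sum_{w\in\SSS_n}G_w(x)f_{P_{\ell_w}}(x)$, which is $(\dagger)$ (the name of the variable is immaterial, as these are polynomial identities); by the first paragraph this is equivalent to \eqref{eqn:recurrence}.

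Before attempting the general tube-by-tube argument I would sanity-check the geometric lemma on small cases: for $n=2$ and $P=C_3$, with $S$ the top two elements, $P_1=C_2$, and $P_2=A_1\oplus A_2$, both sides evaluate to $f_P(x)=x+2$, and the $L$-image of Proposition~\ref{prop:better_identity} recovers $\tfrac12\big(f_{P_2}(x)+(2+x)f_{P_1}(x)\big)=x+2$, in agreement with the stated $n=2$ form of the theorem. The decisive difficulty remains the combinatorial proof of the geometric lemma; everything downstream of it is the elementary application of Proposition~\ref{prop:better_identity} through the operator $L$.
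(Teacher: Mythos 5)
Your formal skeleton coincides with the paper's own: the substitution $x\mapsto x+1$ (so that $B_w(x+1)=G_w(x)$ and $h_Q(x+1)=f_Q(x)$) correctly reduces (\ref{eqn:recurrence}) to your $f$-polynomial identity $(\dagger)$, and your operator $L$ --- extract the coefficient of $t^k$, replace it by $f_{P_k}(x)$, using $\prod_{j=0}^{\ell-1}(t+jx)=\sum_k s_{\ell,k}\,t^k x^{\ell-k}$ --- is exactly how the paper passes between the two sides of (\ref{eqn:better_identity}). However, your ``geometric lemma'' is precisely Lemma~\ref{lem:f-sum} of the paper rewritten as a sum over permutations (the conversion uses $\FT_\lambda(x)=\lambda_1\cdots\lambda_\ell\, F_{(\lambda_1-1,\lambda_2-1,\ldots)}(x)$ together with the fact that $n!/(\lambda_1\cdots\lambda_\ell\, c_1!\cdots c_n!)$ counts permutations of cycle type $\lambda$), and you leave it unproven while yourself flagging it as the main obstacle. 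Since Proposition~\ref{prop:reduce} has no content apart from this lemma --- everything downstream is the formal manipulation you carried out --- the proposal has a genuine gap rather than a complete proof.

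Moreover, your sketch of how the lemma would be proved contains a misstep that would derail the tube-by-tube argument as outlined. Contracting the maximal degrading tubes (your top-level blocks of the laminar family on $S$) does \emph{not} collapse $S$ to an antichain: it replaces the chain $C_n$ by a shorter \emph{chain} $C_\ell$ and yields a non-degradable tubing of the new poset (Lemma~\ref{lem:degradable}); the internal structure of the blocks is what contributes the type A factor $F_{(\lambda_1-1,\lambda_2-1,\ldots)}(x)$. The genuinely hard step, and the actual source of your rising factorials, is the passage from the chain $C_\ell$ to the antichains $A_1,\ldots,A_\ell$: Lemma~\ref{lem:non-degradable} asserts $n!\,t_k=\sum_i s_{n,i}\,t_{i,k}$, so the weight $s_{\ell,k}x^{\ell-k}$ records permutations of the $\ell$ contracted blocks having $k$ cycles --- not ``ways blocks may nest or merge.'' Its proof requires handling exactly the straddling tubes you defer: the bad tubes split into two nested families $\TL$ and $\TU$, giving the decomposition $(\mathcal L,\mathcal M,\mathcal U)$ of \cite{nguyen2023poset} in which $\mathcal M$ is an ordered set partition of $S$, and one then needs the bijection between pairs $(w,\alpha)$, with $w\in\SSS_n$ and $\alpha$ a composition of $n$ into $k$ parts, and pairs $(\omega,U)$, with $\omega\in\SSS_n$ having $\ell$ cycles and $U$ an ordered set partition of its cycles into $k$ blocks. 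Until you supply this count (or an equivalent), your argument establishes only the formal equivalence you already observed, not that Proposition~\ref{prop:better_identity} implies Theorem~\ref{thm:recurrence}.
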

    We will need a few lemmas to prove Proposition \ref{prop:reduce}. Let $P$ be a poset with a proper autonomous subposet $S = C_n$. We say a tubing $T$ of $P$ is \textit{degradable} if there is a tube $\tau\in T$ such that $\tau\subseteq S$. We say that $T$ is \textit{non-degradable} otherwise. Our main lemma is the following, which will be proved in Section \ref{subsec:proof-of-main-lem}.

    \begin{lemma}\label{lem:non-degradable}
        Let $P, P_1,\ldots,P_n$ be defined as in Theorem \ref{thm:recurrence}. Let $t_{k}$ be the number of non-degradable tubings of $P$ with $k$ tubes, and $t_{i,k}$, for $1\leq i\leq n$, be the number of tubings of $P_i$ with $k$ tubes. Then
        \[ n!t_{k} = \sum_{i = 1}^n s_{n,i}t_{i,k}. \]
    \end{lemma}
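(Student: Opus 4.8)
The plan is to prove the identity bijectively by reading both sides as counts of pairs. Since $s_{n,i}$ is the number of permutations in $\SSS_n$ with exactly $i$ cycles, the right-hand side $\sum_i s_{n,i} t_{i,k}$ counts pairs $(w, T')$ with $w \in \SSS_n$ and $T'$ a tubing of $P_{\ell_w}$ having $k$ tubes, while the left-hand side $n! t_k$ counts pairs $(\sigma, T)$ with $\sigma \in \SSS_n$ and $T$ a non-degradable tubing of $P$ having $k$ tubes. So it suffices to build a bijection between these two sets of pairs that preserves the number of tubes $k$.

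First I would record how tubes interact with the autonomous part, using that $S$ is autonomous: every $z \in Q := P - S = P_i - A_i$ lies below all of $S$, above all of $S$, or incomparable to all of $S$, so the ``outside'' poset $Q$ is common to $P$ and to every $P_i$ and sees the autonomous part uniformly. In any tubing, the tubes meeting $S$ intersect it in a nested-or-disjoint family, and by convexity each such intersection is a subinterval of the chain $C_n$; similarly the tubes meeting $A_i$ cut out a nested-or-disjoint family of subsets of the antichain. A crucial asymmetry is that no tube can lie inside the antichain $A_i$ (two distinct antichain elements are nonadjacent in the Hasse diagram, hence disconnected), whereas a tube lying inside $C_n$ is exactly a subinterval of size $\ge 2$; these are precisely the tubes forbidden by non-degradability. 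This is why non-degradable tubings of $P$ are the correct objects to match with \emph{all} tubings of $P_i$. Contracting the autonomous part to a single point sends both kinds of tubing to a tubing of $P_1$, and the fibers of this contraction are controlled entirely by the internal combinatorics of the autonomous part sitting inside the nested chain of tubes passing through it.

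The core of the argument is then local to the autonomous region. I would expand each antichain element $a_j$ of $P_i$ (which corresponds to one cycle of $w$) back into the block of chain-elements indexed by that cycle, using the cyclic order of the cycle to record how the block ``wraps,'' and I would use the free permutation $\sigma \in \SSS_n$ both to interleave the blocks into the single total order of $C_n$ and to resolve each innermost tube so that it reaches out to an adjacent element of $Q$ (never staying inside $S$). Expansion relabels and reindexes tubes without creating or destroying any, so $k$ is preserved. On the counting level this is exactly the factorization $s_{n,i} = \sum_{\pi \vdash [n],\, |\pi| = i} \prod_{B \in \pi} (|B|-1)!$, where $(|B|-1)!$ counts the cyclic orders of a block $B$; after summing over set partitions and dividing by $n!$ the two tallies agree. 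As a sanity check, when $P = C_{n+1}$ one computes $t_k = \binom{n-1}{k}$ (the non-degradable tubings use only the nested family of prefixes of the chain through the root) and $t_{i,k} = (k+1)!\,S(i,k+1)$, where $S$ is the Stirling number of the second kind (tubings of the claw are flags of subsets), and the identity becomes the classical $n!\binom{n-1}{k} = (k+1)!\sum_i s_{n,i}\, S(i,k+1)$.

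The main obstacle will be making this interleaving-and-resolution step canonical and checking that it is a genuine bijection in both directions. Concretely, one must show that the single linear order on $C_n$ decomposes uniquely as a set partition of $[n]$ into blocks, a cyclic order on each block, and interleaving data, in a manner that is compatible with and independent of the surrounding tubing of $Q$ --- this is where autonomy must be invoked carefully --- and that under expansion and contraction convexity, connectivity, and the acyclic condition are all preserved, so that valid tubings correspond to valid tubings. Once well-definedness and invertibility are established, the factorial bookkeeping through the Stirling numbers is routine.
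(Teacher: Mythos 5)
Your framing coincides with the paper's: read $n!\,t_k$ as counting pairs $(w,T)$ with $w\in\SSS_n$ and $T$ a non-degradable tubing of $P$, read $\sum_i s_{n,i}t_{i,k}$ as counting pairs $(\omega,T')$ with $T'$ a tubing of $P_{\ell_\omega}$, localize at the autonomous part, and match cycles with blocks via $s_{n,i}=\sum_{|\pi|=i}\prod_{B\in\pi}(|B|-1)!$. Your preliminary observations are correct (no tube fits inside the antichain $A_i$, so all tubings of $P_i$ are automatically ``non-degradable''; tubes meet $C_n$ in intervals), and your sanity check for $P=C_{n+1}$ is right --- in fact it is exactly the refined local identity $n!\binom{n-1}{m-1}=m!\sum_{\ell}s_{n,\ell}S(\ell,m)$ that the general case needs, where $m$ tracks the number of blocks cut out of the autonomous part. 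But the step you yourself flag as ``the main obstacle'' is the entire content of the lemma, and your sketch of it would fail as stated. Expanding each antichain element $a_j$ into the subset $B_j\subseteq[n]$ supporting the $j$-th cycle of $\omega$ does not produce a tubing of $P$: a tube of $T'$ meeting the antichain in several elements would expand to a tube meeting $C_n$ in a union of arbitrary subsets, while convexity forces every tube of $P$ to meet the chain in an interval. So the bijection cannot act tube-by-tube on expanded blocks; the free permutation must absorb the discrepancy between arbitrary set partitions (cycle supports) and compositions (interval traces), and you never specify how. Your description also conflates the two directions: $\sigma$ lives on the left-hand side, so it cannot be ``used'' to build a left pair from a right pair --- it has to be produced by the map.

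The paper closes this gap with two pieces of machinery you would need to supply. First, the decomposition of the bad tubes into a triple $(\mathcal{L},\mathcal{M},\mathcal{U})$ from \cite{nguyen2023poset}, together with its reconstruction lemma, shows that a non-degradable tubing of $P$ is equivalent to frame data (the good tubes plus the nested, marked sequences $\mathcal{L}$, $\mathcal{U}$ in $P-S$) together with an ordered sequence $\mathcal{M}$ of blocks of $S$ --- a composition $\alpha$ of $n$ when $S=C_n$, an ordered set partition $U$ of $\{1,\dots,\ell\}$ when $S$ is replaced by $A_\ell$ --- with the same frame and the same number of parts on both sides. This is what makes your ``fibers of the contraction'' precise and justifies independence from the surrounding tubing, which you only assert. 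Second, an explicit part-count-preserving bijection $(w,\alpha)\leftrightarrow(\omega,U)$: cut $w$ into consecutive segments according to $\alpha$, view each segment as a permutation of its own value set and decompose it into cycles to form $\omega$, and record which cycles come from which segment to form $U$. The segments' value sets are arbitrary subsets of $[n]$ --- this is precisely where scattered cycle supports get reconciled with interval traces, the point your interleaving sketch leaves open. Your counting identity $s_{n,i}=\sum\prod(|B|-1)!$ is the shadow of this bijection, but on its own it does not track the number of parts of $\mathcal{M}$, which must be preserved so that the tube count $k$ matches; without that refinement the bookkeeping does not close.
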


    On the other hand, if $T$ is degradable, we say a tube $\tau$ of $T$ is \textit{degrading} if $\tau\subseteq S$. Clearly, the degrading tubes of $T$ gives a tubing of $S$. Here we modify the rule slightly and allow $S$ to be a tube of $S$.

    Given a tubing of $S = C_n$, we say a tube is \textit{maximal} if it is not contained in another tube. We say an element $s\in S$ is \textit{lonely} if it is not contained in any tube. Then, the lonely elements and maximal tubes of each tubing gives a composition of $n$.
    
    \begin{example}
        Figure \ref{fig:degrading} shows a tubing of $S = C_{10}$. The lonely elements and maximal tubes are colored red. The composition is $(2,1,1,3,3)$.

        \begin{figure}[h!]
            \centering
            \includegraphics[scale = 0.7]{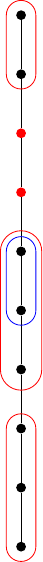}
            \caption{A tubing of $S = C_{10}$}
            \label{fig:degrading}
        \end{figure}
    \end{example}

    The following lemma is immediate.

    \begin{lemma}\label{lem:degradable}
        Let $P$ be defined as in Theorem \ref{thm:recurrence}. Let $T'$ be a tubing of $S = C_n$, let $(\alpha_1,\ldots,\alpha_\ell)$ be the composition corresponding to $T'$. Then the number of tubings with $k$ tubes of $P$ that contain $T'$ is the same as the number of non-degradable tubings of $P'$ with $k-|T'|$ tubes, where $P'$ is obtained from $P$ by replacing $S$ by $C_\ell$.
    \end{lemma}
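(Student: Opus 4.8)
The plan is to prove the lemma by exhibiting an explicit bijection obtained by \emph{contracting} $T'$. Form $P'$ exactly as in the statement: collapse each maximal tube of $T'$ (an interval of the chain $S=C_n$) to a single element and leave the lonely elements of $T'$ untouched, so that $S$ is replaced by the chain $C_\ell$, where $\ell$ is the number of parts of the composition $(\alpha_1,\ldots,\alpha_\ell)$. Write $c\colon P\to P'$ for the resulting surjection; it is order-preserving and restricts to a bijection on $P-S$. I claim $c$ induces a bijection between the tubings $T$ of $P$ containing $T'$ — that is, those whose degrading tubes are exactly $T'$ — and the non-degradable tubings of $P'$, dropping the tube count by exactly $|T'|$.

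First I would analyze how a tube $\tau\in T$ meets $S$. If $\tau\subseteq S$ then $\tau$ is a degrading tube, hence $\tau\in T'$; since every tube of $T'$ lies inside some maximal tube of $T'$, its image $c(\tau)$ is a single point, so all $|T'|$ tubes of $T'$ vanish under $c$. If $\tau\not\subseteq S$, then compatibility of $\tau$ with each maximal tube $\mu_j$ of $T'$ forces $\tau$ to either contain $\mu_j$ or be disjoint from it; combined with autonomy of $S$ — every element of $P-S$ lies below all of $S$, above all of $S$, or incomparable to all of $S$ — the intersection $\tau\cap S$ must be a union of whole parts of the composition, so $c(\tau)$ is again convex and connected, i.e.\ a genuine tube of $P'$. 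This is the step I expect to require the most care: one must check, using autonomy together with compatibility, that a non-degrading $\tau$ cannot meet $S$ in a proper sub-interval of a single part, so that $c$ neither collapses nor merges any tube other than the $|T'|$ tubes of $T'$.

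With that in hand, I would verify that $c(T)$ is a genuine tubing of $P'$. Pairwise nesting/disjointness and the acyclic condition are preserved because $c$ is order-preserving and collapses only nested intervals: a relation $c(\sigma)\prec c(\tau)$ in $P'$ lifts to $\sigma\prec\tau$ in $P$ by choosing representatives in the corresponding parts, so a cycle downstairs would produce a cycle in $T$. The image is non-degradable, since a tube of $c(T)$ contained in $C_\ell$ would pull back to a degrading tube of $T$ spanning at least two parts of the composition, which cannot lie in $T'$, contradicting that the degrading tubes of $T$ are precisely $T'$. By the bookkeeping of the previous paragraph, $c(T)$ has exactly $k-|T'|$ tubes.

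Finally I would build the inverse. Given a non-degradable tubing $U$ of $P'$ with $k-|T'|$ tubes, expand each point of $C_\ell$ back into its part of $S$, so that a tube of $U$ meeting that point swells to contain the whole interval, and then re-insert the tubes of $T'$ inside those intervals. Non-degradability of $U$ guarantees that each expanded tube stays convex and connected and that no new degrading tube is created, so the result is a tubing of $P$ with $k$ tubes whose degrading tubes are exactly $T'$. Checking that expansion and contraction are mutually inverse is then routine, which is why the lemma is essentially immediate once the contraction map is set up; the only genuine content is the autonomy-based control of $\tau\cap S$ flagged above.
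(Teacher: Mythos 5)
Your proposal is correct and is essentially the paper's own argument: the paper declares the lemma immediate and justifies it by the very same contraction, collapsing each maximal tube of $T'$ to a single element to pass between tubings of $P$ whose degrading tubes are exactly $T'$ and non-degradable tubings of $P'$ (illustrated in Figure \ref{fig:degrade-to-nondegrade}). You simply make explicit the compatibility and autonomy checks (tubes not contained in $S$ meet each part wholly or not at all, relations lift under contraction, and expansion is inverse) that the paper leaves implicit.
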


    To see Lemma \ref{lem:degradable}, from a tubing with $k$ tubes of $P$ that contain $T'$, one can contract every maximal tube of $T'$ into a single element and obtain a non-degradable tubing of $P'$ with $k-|T'|$ tubes. Figure \ref{fig:degrade-to-nondegrade} gives an example of this contraction.

    \begin{figure}[h!]
        \centering
        \includegraphics[scale = 0.8]{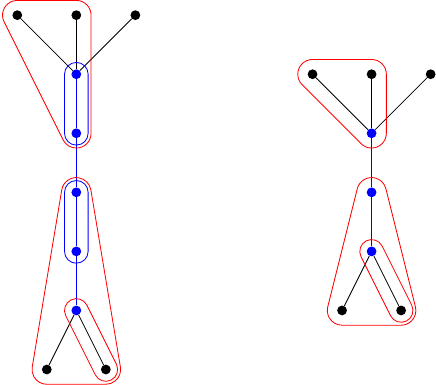}
        \caption{A degradable tubing of $P$ with $S = C_5$ (left) and a non-degradable tubing of $P'$ with $S$ replaced by $C_3$ (right)}
        \label{fig:degrade-to-nondegrade}
    \end{figure}

    Combining Lemma \ref{lem:non-degradable} and \ref{lem:degradable}, we have the following lemma.

    \begin{lemma}\label{lem:f-sum}
        With the same notations as in Theorem \ref{thm:recurrence}, let $f_{P}(x)$, $f_{P_1}(x)$, $\ldots$, $f_{P_n}(x)$ be the $h$-polynomials of $\AAA(P)$, $\AAA(P_1)$, $\ldots$, $\AAA(P_n)$, respectively. Then,
        \begin{equation}\label{eqn:f-sum}
            n!f_P(x) = \sum_{\lambda\vdash n}\dfrac{n!}{\ell(\lambda)!}R(\lambda)F_{(\lambda_1 - 1,\lambda_2 - 1,\ldots)} (x) \left(\sum_{k = 1}^{\ell(\lambda)} s_{\ell(\lambda), k}x^{\ell(\lambda) - k} f_{P_k}(x) \right), 
        \end{equation}
        where $R(\lambda)$ is the number of rearrangements of $\lambda$.
    \end{lemma}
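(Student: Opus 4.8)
The plan is to count tubings of $P$ by their number of tubes and only convert to $f$-polynomials by reversal at the very end. For a connected poset $Q$, write $\Phi_Q(x) = \sum_{T} x^{|T|}$, summing over all tubings $T$ of $Q$ weighted by number of tubes; since a $k$-tube tubing indexes a face of $\AAA(Q)$ of codimension $k$, this is exactly the reversed $f$-polynomial $\Phi_Q(x) = f_Q^{\rev}(x)$. I will also write $\Psi_Q(x)$ for the same sum restricted to non-degradable tubings. The whole argument then amounts to organizing the tubings of $P$ and assembling $\Phi_P$ out of the $\Phi_{P_k}$.

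First I would sort the tubings of $P$ by their degrading part $T' = \{\tau\in T:\tau\subseteq S\}$, which is itself a tubing of $S=C_n$ (allowing $S$ as a tube), with $|T| = |T'| + |T\setminus T'|$. By Lemma \ref{lem:degradable}, contracting the maximal tubes of $T'$ sends the tubings of $P$ with degrading part $T'$ bijectively onto the non-degradable tubings of $P'_{\ell}$, the poset obtained from $P$ by replacing $S$ with $C_\ell$, where $\ell$ is the length of the composition attached to $T'$; this shifts the tube count by $|T'|$. Grouping the $T'$ by their composition yields
\[ \Phi_P(x) = \sum_{\ell=1}^{n} \Psi_{P'_\ell}(x)\, C_\ell(x), \qquad C_\ell(x) = \sum_{(\alpha_1,\dots,\alpha_\ell)\models n}\ \prod_{i=1}^{\ell} g_{\alpha_i}(x), \]
where $g_m(x)$ is the generating function (by number of tubes) for the structure inside a single block of size $m$.

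Next I would evaluate the two ingredients. The tubes of $T'$ inside a maximal block of size $m$ form, together with the block itself, a tubing of $\AAA(C_m)$ augmented by its top tube; since $\AAA(C_m)$ has $f$-polynomial $F_{m-1}$, each block of size $m\ge 2$ contributes $g_m(x) = x\,F_{m-1}^{\rev}(x)$, and a lonely block contributes $g_1 = 1$. Summing the compositions that rearrange a fixed partition $\lambda$ introduces the factor $R(\lambda)$. For the non-degradable part, Lemma \ref{lem:non-degradable} applied to $P'_\ell$ (whose antichain replacement $(P'_\ell)_k$ is again $P_k$) gives $\Psi_{P'_\ell}(x) = \tfrac1{\ell!}\sum_{k=1}^{\ell} s_{\ell,k}\,\Phi_{P_k}(x)$, with the Stirling numbers entering exactly as stated there. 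Collecting everything and clearing the $\tfrac1{\ell!}$ produces
\[ n!\,\Phi_P(x) = \sum_{\lambda\vdash n} \frac{n!}{\ell(\lambda)!}\,R(\lambda)\Big(\prod_i g_{\lambda_i}(x)\Big)\Big(\sum_{k=1}^{\ell(\lambda)} s_{\ell(\lambda),k}\,\Phi_{P_k}(x)\Big). \]

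Finally I would reverse this identity, substituting $x\mapsto 1/x$ and multiplying by $x^{|P|-2}$ to recover $f_P$ on the left. The only real work is the degree bookkeeping. Using $\deg F_{m-1}=m-2$ one checks that uniformly $g_m(1/x) = x^{1-m}F_{m-1}(x)$, so the product collapses to $\prod_i g_{\lambda_i}(1/x) = x^{\ell-n}F_{(\lambda_1-1,\dots)}(x)$; meanwhile $\Phi_{P_k}(1/x) = x^{-(|P_k|-2)}f_{P_k}(x)$ with $|P_k| = |P| - n + k$. The total power of $x$ multiplying $f_{P_k}$ is therefore $(|P|-2) + (\ell-n) - (|P_k|-2) = \ell-k$, which is precisely the factor $x^{\ell(\lambda)-k}$ in the statement, and the $g$-product becomes $F_{(\lambda_1-1,\dots)}(x)$. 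This gives (\ref{eqn:f-sum}) verbatim. I expect the main obstacle to be making the block-decomposition of the degrading part fully rigorous — in particular verifying that the interior of each maximal block is an unconstrained tubing of $\AAA(C_m)$ counted by $x\,F_{m-1}^{\rev}$, and that distinct blocks contribute independently — together with keeping the reversal degree count honest across the three nested summations.
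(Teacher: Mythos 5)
Your proposal is correct and follows essentially the same route as the paper's proof: stratify tubings of $P$ by the degrading part, use Lemma \ref{lem:degradable} to contract to non-degradable tubings of $P'_\ell$, apply Lemma \ref{lem:non-degradable} to get the Stirling-number expansion, and observe that each maximal block of size $m$ contributes a factor governed by $F_{m-1}$. The only difference is presentational: you carry the number-of-tubes grading explicitly and perform the reversal $x\mapsto 1/x$ with the degree count $(|P|-2)+(\ell-n)-(|P_k|-2)=\ell-k$ at the end, which makes rigorous the variable bookkeeping (and in particular the factor $x^{\ell(\lambda)-k}$) that the paper's terser proof leaves implicit.
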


    \begin{proof}
        For each composition $\alpha = (\alpha_1,\ldots,\alpha_\ell)$ that is a rearrangement of a partition $\lambda$, the generating function for the degrading tubings of $S$ whose composition is $\alpha$ is $F_{(\lambda_1 - 1,\lambda_2 - 1,\ldots)}(x)$. This is because for each maximal tube $\tau$ of $S$, the tubes contained in $\tau$ form a tubing of $C_{|\tau|}$, and the generating function for such tubings is $F_{|\tau| -1}(x)$. By Lemma \ref{lem:degradable}, degradable tubings of $P$ in which the composition of the degrading tubings is $\alpha$ can be viewed as non-degradable tubings of $P'$ obtained from $P$ by replacing $S$ by $C_\ell$. Then by Lemma \ref{lem:non-degradable}, non-degradable tubings of $P'$ can be written as a sum of tubings of $P_1,\ldots,P_\ell$ with coefficients $s_{\ell,k}$. This gives the desired formula.
    \end{proof}

    Now we are ready to prove Proposition \ref{prop:reduce}.

    \begin{proof}[Proof of Proposition \ref{prop:reduce}]
        For each partition $\lambda$, one can view $\lambda$ as a tuple $(c_1,\ldots,c_n)$ such that $\sum_i ic_i = n$. Then, in the RHS of (\ref{eqn:f-sum}),
        \[ R(\lambda) = \dfrac{\ell(\lambda)!}{c_1!\ldots c_n!}. \]
        Thus,
        \[ \dfrac{n!}{\ell(\lambda)!}R(\lambda)F_{(\lambda_1 - 1,\lambda_2 - 1,\ldots)} = \dfrac{n!}{c_1!\ldots c_n!}F_{(\lambda_1 - 1,\lambda_2 - 1,\ldots)}(x) \]
        \[ = \dfrac{n!}{\lambda_1\ldots\lambda_\ell\cdot c_1!\ldots c_n!}\lambda_1\ldots\lambda_\ell\cdot F_{(\lambda_1 - 1,\lambda_2 - 1,\ldots)}(x) = \dfrac{n!}{\lambda_1\ldots\lambda_\ell\cdot c_1!\ldots c_n!}\FT_{\lambda}(x). \]
        Notice that $\frac{n!}{\lambda_1\ldots\lambda_\ell\cdot c_1!\ldots c_n!}$ is the number of permutations in $\SSS_n$ with cycle type $\lambda$, so the RHS of (\ref{eqn:f-sum}) becomes
        \[ \sum_{w\in\SSS_n}\FT_{w}(x)\left(\sum_{k = 1}^{\ell_w} s_{\ell_w, k}x^{\ell_w - k} f_{P_k}(x) \right). \]
        Recall that $s_{n,k}$ is the coefficient of $x^{n-k}$ in $1(1+x)\ldots(1 + (n-1)x)$. Hence, the coefficient of $f_{P_k}(x)$ in the above sum is the coefficient of $t^k$ in
        \[ \sum_{w\in\SSS_n}\FT_{w}(x)t(t+x)\ldots(t+(\ell_w -1)x), \]
        which is the RHS of (\ref{eqn:better_identity}).

        Finally, by the $h$-to-$f$-vector conversion, one can check that the coefficient of $f_{P_k}(x)$ in the RHS of (\ref{eqn:recurrence}) is the coefficient of $t^k$ in the LHS of (\ref{eqn:better_identity}). Hence, Propostion \ref{prop:better_identity} implies Theorem \ref{thm:recurrence}.
    \end{proof}

\subsection{Proof of Lemma \ref{lem:non-degradable}}\label{subsec:proof-of-main-lem}

\subsubsection{Decomposition}\label{subsubsec:decom}

    One main part of the proof involves constructing tubings of $P_1,\ldots,P_n$ from non-degradable tubings of $P$. To do this, we will use the decomposition in \cite{nguyen2023poset}.

    \begin{definition}\label{def:good-bad-tubes}
        A tube $\tau \in T$ is \emph{good} if $\tau \subseteq P - S$, $\tau \subseteq S$, or $S \subseteq \tau$ and is \emph{bad} otherwise.  We denote the set of good tubes by $\Tgood$ and the set of bad tubes by $\Tbad$. A tube $\tau \in \Tbad$ is called \emph{lower} (resp. \emph{upper}) if there exist $x \in \tau-S$ and $y \in \tau \cap S \text{ such that } x \preceq y$ (resp. $y \preceq x$). We denote the set of lower tubes by $\TL$ and the set of upper tubes by $\TU$.
    \end{definition}

    \begin{lemma}[{\cite[Lemma 3.4]{nguyen2023poset}}]
        $\Tbad$ is the disjoint union of $\TL$ and $\TU$. Moreover, $\TL$ and $\TU$ each form a nested sequence.
    \end{lemma}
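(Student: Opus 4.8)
The plan is to exploit two structural facts: the uniform behavior forced by autonomy, and the acyclic condition on tubings. First I would unpack what it means for a tube $\tau$ to be bad. Since $\tau \not\subseteq S$ and $\tau \not\subseteq P-S$, the tube meets both $S$ and $P-S$, and since $S \not\subseteq \tau$ we have $\emptyset \neq \tau \cap S \subsetneq S$ together with $\tau - S \neq \emptyset$. The crucial observation is that autonomy upgrades any single comparability between an element $z \in P - S$ and an element of $S$ into a comparability with \emph{all} of $S$: because $z \preceq s_i \Leftrightarrow z \preceq s_j$ and $s_i \preceq z \Leftrightarrow s_j \preceq z$, each $z \in P-S$ is either below every element of $S$, above every element of $S$, or incomparable to every element of $S$.

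Next I would show $\Tbad = \TL \cup \TU$. Because $\tau$ is connected, the Hasse diagram induced on $\tau$ is connected, and its vertex set splits as $(\tau \cap S) \sqcup (\tau - S)$ with both parts nonempty; hence there is a Hasse edge, i.e.\ a covering relation, joining some $x \in \tau - S$ to some $y \in \tau \cap S$. This comparability makes $\tau$ lower or upper. To see the two cases are exclusive I would argue by contradiction: if $\tau$ were simultaneously lower and upper, the autonomy upgrade would produce $x_1 \in \tau - S$ below all of $S$ and $x_2 \in \tau - S$ above all of $S$. Then $x_1 \preceq s \preceq x_2$ for every $s \in S$, and convexity of $\tau$ forces $S \subseteq \tau$, contradicting the badness of $\tau$. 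Thus $\Tbad$ is the disjoint union of $\TL$ and $\TU$.

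Finally, to see that $\TL$ (and symmetrically $\TU$) is nested, I would recall that any two tubes of a tubing are nested or disjoint, and then rule out disjointness using the acyclic condition. Suppose $\tau, \tau' \in \TL$ are disjoint. Being lower, $\tau$ has a witness $x \in \tau - S$ lying below all of $S$; since $\tau'$ meets $S$, choosing any $s' \in \tau' \cap S$ gives $x \preceq s'$, hence $\tau \prec \tau'$. The same argument with the roles of $\tau$ and $\tau'$ reversed gives $\tau' \prec \tau$, producing the $2$-cycle $\tau \prec \tau' \prec \tau$, which violates the acyclic condition. The identical argument, with "below" replaced by "above" and the comparabilities reversed, handles two disjoint upper tubes. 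Therefore $\TL$ and $\TU$ are each totally ordered by inclusion.

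I expect the main subtlety to be the nesting claim rather than the partition: it is the only place the acyclic condition enters, and it relies on combining the autonomy upgrade (a single local comparability becomes a comparability with every element of $S$) with the observation that two disjoint same-type tubes each feed a comparability into the other, forcing the forbidden $2$-cycle. The partition of $\Tbad$ into lower and upper tubes, by contrast, should follow directly from connectivity and convexity.
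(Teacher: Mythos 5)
Your proof is correct: the autonomy upgrade (any $z \in P-S$ comparable to one element of $S$ is comparable in the same direction to all of $S$), the connectivity argument extracting a covering relation between $\tau - S$ and $\tau \cap S$, the convexity argument showing a tube that is both lower and upper would absorb all of $S$, and the $2$-cycle $\tau \prec \tau' \prec \tau$ forbidding two disjoint tubes of the same type are all sound and together yield exactly the statement. Note that the present paper does not prove this lemma at all --- it is imported verbatim from \cite[Lemma 3.4]{nguyen2023poset} --- and your argument is essentially the standard one given there, combining the same three ingredients (autonomy, convexity, and the acyclic condition), so there is no substantive difference in approach to report.
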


    \begin{definition}[Tubing decomposition]  
    \label{def:tubing_decomp}
    
        Since $\TL$ forms a nested sequence, we can write $\TL = \{\tau_1, \tau_2, \ldots \}$ where $\tau_i \subset \tau_{i+1}$ for all $i$.  For convenience, we define $\tau_0 = \emptyset$. We define a nested sequence $\mathcal L = (L_1, L_2, \ldots)$ and a sequence of disjoint sets $\mathcal M_L = (M_{L}^1, M_{L}^2, \ldots)$ as follows. 
        \begin{itemize}

        \item For each $i \ge 1$, let $L_i = \tau_i - S$, and mark $L_i$ with a star if $(\tau_i - \tau_{i-1}) \cap S \neq \emptyset$.  
    
    \item If $L_{i}$ is the $j$-th starred set, let $M_{L}^j = (\tau_{i}-\tau_{i-1}) \cap S$.
    
    \end{itemize}
    We define the sequences $\mathcal U$ and $\mathcal M_U$ analogously.  
    We make the following definitions.
    \begin{itemize}
        
        \item Let $\hat M := S - \bigcup\limits_{\tau \in \Tbad} \tau.$
    
        \item For sequences $\mathbf{a}=(a_1,\ldots,a_p) \text{ and } \mathbf{b}=(b_1,\ldots,b_q)$, let the sequence $\mathbf{a} \append \mathbf{b}:=(a_1,\ldots,a_p,b_1,\ldots,b_q)$ be their concatenation, and let $\overline{\mathbf a}:=(a_p,\ldots,a_1)$ be the reverse of $\mathbf{a}$.
        \item We define $$\mathcal M :=  \begin{cases}
            \mathcal M_L \append \overline{\mathcal M}_U & \text{if } \hat M = \emptyset\\
    
            \mathcal M_L \append (\hat M) \append \overline{\mathcal M}_U & \text{if } \hat M \neq \emptyset 
            
        \end{cases}$$
        where $(\hat M)$ is the sequence containing exactly one set: $\hat M$.
    
    \item The \emph{decomposition} of $\Tbad$ is the triple $(\mathcal L, \mathcal M, \mathcal U)$.
    \end{itemize}
    
    \end{definition}

    \begin{example}
        Figure \ref{fig:decomposition} gives an example of a decomposition.

        \begin{figure}[h!]
            \centering
            \begin{subfigure}[t]{0.33\textwidth}
                \centering
                \includegraphics[scale = 1]{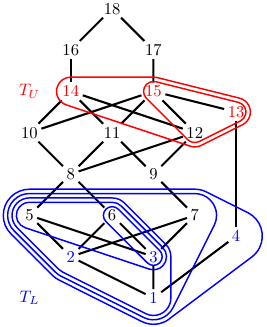}
                \caption{$T_L$ is blue and $T_U$ is red.}
                \label{subfig:TLandTU}
            \end{subfigure}~
            \begin{subfigure}[t]{0.66\textwidth}
                \centering
                \includegraphics[scale = 1]{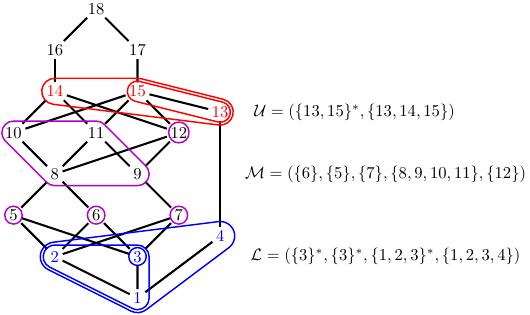}
                \caption{$\mathcal L$ is blue, $\mathcal M$ is purple, and $\mathcal U$ is red.}
                \label{subfig:Decomposition}
            \end{subfigure}
            \caption{The decomposition of $\Tbad$.}
            \label{fig:decomposition}
        \end{figure}
    \end{example}

    \begin{lemma}[{\cite[Lemma 3.6]{nguyen2023poset}}]\label{lem:reconstruct}
        $\Tbad$ can be reconstructed from its decomposition.
    \end{lemma}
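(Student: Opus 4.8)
The plan is to exhibit an explicit inverse to the decomposition map, recovering each bad tube from the triple $(\mathcal L, \mathcal M, \mathcal U)$. The whole argument rests on the chain structure of $S = C_n$ together with autonomy. First I would record the structural facts. Since $S$ is autonomous, every $z \in P - S$ either lies below all of $S$, above all of $S$, or is incomparable to all of $S$. For a lower tube $\tau$ there is some $x \in \tau - S$ with $x \preceq y$ for $y \in \tau \cap S$; autonomy upgrades this to $x \preceq s$ for every $s \in S$, and convexity of $\tau$ then forces $\tau \cap S$ to be an initial segment (downset) of the chain $S$: if $y \in \tau \cap S$ and $s \preceq y$, then $x \preceq s \preceq y$ puts $s \in \tau$. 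Symmetrically, every upper tube meets $S$ in a final segment (upset). Moreover, a bad tube cannot contain $S$, so these segments are proper, and an easy nesting argument shows the lower-covered initial segment and upper-covered final segment are disjoint.

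Next I would analyze how these segments grow along the nested sequences. Writing $\TL = \{\tau_1 \subset \tau_2 \subset \cdots\}$, the sets $\tau_i \cap S$ form an increasing chain of initial segments, so each nonempty increment $M_L^j = (\tau_i - \tau_{i-1}) \cap S$ is an interval of $S$, and these intervals appear in $\mathcal M_L$ in bottom-to-top order. Dually, the increments $M_U^j$ for the upper tubes are intervals stacked top-to-bottom, so $\overline{\mathcal M}_U$ lists them bottom-to-top. Since $\hat M$ is exactly the complementary middle interval between the lower and upper parts, the concatenation $\mathcal M = \mathcal M_L \append (\hat M) \append \overline{\mathcal M}_U$ (with the middle term present iff $\hat M \neq \emptyset$) is precisely an ordered partition of the chain $S$ into intervals, read from bottom to top.

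With this in hand the reconstruction is mechanical. Let $p$ be the number of starred entries of $\mathcal L$ and $q$ the number of starred entries of $\mathcal U$; both are read off from $\mathcal L$ and $\mathcal U$ alone. Then $\mathcal M_L$ is the list of the first $p$ entries of $\mathcal M$, $\mathcal M_U$ is the reverse of the last $q$ entries, and $\hat M$ is the single middle entry when $|\mathcal M| = p + q + 1$ and is empty when $|\mathcal M| = p + q$. For each $L_i$ I would set $j_i$ to the number of stars among $L_1, \ldots, L_i$ and reconstruct $\tau_i = L_i \cup (M_L^1 \cup \cdots \cup M_L^{j_i})$, using that $\tau_i \cap S$ is the union of the first $j_i$ increments; the upper tubes are rebuilt identically from $\mathcal U$ and $\mathcal M_U$. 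Taking $\Tbad = \TL \sqcup \TU$ and comparing with Definition \ref{def:tubing_decomp} confirms we recover the original set of bad tubes.

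The step I expect to be the main obstacle is justifying the splitting of $\mathcal M$: one must be certain that the star counts $p$ and $q$ correctly locate the boundaries between $\mathcal M_L$, $\hat M$, and $\overline{\mathcal M}_U$ inside $\mathcal M$, so that no information is lost in forming the concatenation. This is exactly where the chain hypothesis is essential — it is what makes each piece an interval and forces the global bottom-to-top ordering, so that $|\mathcal M| - (p+q) \in \{0,1\}$ detects $\hat M$ and the split is unambiguous. Everything else is a routine check that the rebuilt tubes agree with the originals.
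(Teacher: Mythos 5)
Your reconstruction is correct, and it matches the situation in the paper: the lemma is imported without proof from \cite[Lemma~3.6]{nguyen2023poset}, and the explicit inversion you describe --- read off the star counts $p$ and $q$ from $\mathcal L$ and $\mathcal U$, declare the first $p$ entries of $\mathcal M$ to be $\mathcal M_L$, the last $q$ entries reversed to be $\mathcal M_U$, with a middle entry $\hat M$ present exactly when $|\mathcal M| = p+q+1$, and then rebuild $\tau_i = L_i \cup M_L^1 \cup \cdots \cup M_L^{j_i}$ where $j_i$ counts stars among $L_1,\ldots,L_i$ --- is precisely the inverse map the statement calls for, and your identity $\tau_i \cap S = M_L^1 \cup \cdots \cup M_L^{j_i}$ is justified correctly by nestedness of $\TL$. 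One correction of emphasis, though: what you flag as the ``main obstacle'' is not one, and the chain hypothesis plays no role in it. That $|\mathcal M| - (p+q) \in \{0,1\}$ and that the split of $\mathcal M$ is unambiguous follow immediately from Definition~\ref{def:tubing_decomp}, since $\mathcal M$ is \emph{defined} as the concatenation $\mathcal M_L \append (\hat M) \append \overline{\mathcal M}_U$ (middle term present iff $\hat M \neq \emptyset$) and $\mathcal M_L$, $\mathcal M_U$ have exactly $p$ and $q$ entries by construction; no ordering or interval structure on the blocks is needed to locate the boundaries. Accordingly, your first paragraph (initial/final segments of the chain via autonomy plus convexity) is true but superfluous here, and the lemma holds --- as in the cited source --- for an arbitrary proper autonomous subposet $S$, not only for $S = C_n$. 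The interval structure you establish becomes relevant later, in the proof of Lemma~\ref{lem:non-degradable}, where $\mathcal M$ is identified with a composition of $n$; it is not what makes reconstruction work.
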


\subsubsection{Proof}\label{subsubsec:proof-main-lem}

    In order to prove Lemma \ref{lem:non-degradable}, we will need a small bijection between
    \begin{itemize}
        \item pairs $(w,\alpha)$ where $w\in\SSS_n$ and $\alpha$ is a composition of $n$ into $k$ parts, and
        \item pairs $(\omega,U)$ where $\omega$ is a permutation in $\SSS_n$ with $\ell$ cycles and $U$ is an ordered set partition of $\{1,\ldots,\ell\}$ into $k$ sets.
    \end{itemize}
    Our bijection is constructed as follows. Given a pair $(w,\alpha)$ where $w\in\SSS_n$ and $\alpha = (\alpha_1,\ldots,\alpha_k)$ is a composition of $n$ into $k$ parts:
    \begin{enumerate}
        \item Let $\mu_i = w_{\alpha_1+\ldots+\alpha_{i-1}+1}\ldots w_{\alpha_1+\ldots+\alpha_i}$.
        \item Let $V_i = \{v_1<\ldots<v_{\alpha_i}\}$ be the set of elements in $\mu_i$, then we can consider $\mu_i$ as a permutation of the elements $v_1,\ldots,v_{\alpha_i}$. Let $\sigma_i$ be the cycle decomposition of this permutation.
        \item Let $\omega = \sigma_1\ldots\sigma_i$, this is the desired permutation.
        \item Order the cycles in $\omega$ as $\nu_1, \ldots, \nu_\ell$ in the order of their smallest element, then let $U_i = \{j~|~\text{$\sigma_i$ contains $\nu_j$}\}$. $(U_1,\ldots,U_k)$ is the desired ordered set partition.
    \end{enumerate}
    \begin{example}
        Let $w = 965347128$ and $\alpha = (3,4,2)$.
        \begin{enumerate}
            \item $\mu_1 = 965, \mu_2 = 3471, \mu_{3} = 28$.
            \item $\sigma_1 = (59)(6)$ when we consider $965$ as a permutation of $569$; similarly, $\sigma_2 = (1347), \sigma_3 = (2)(8)$.
            \item $\omega = (59)(6)(1347)(2)(8) = 324796185$.
            \item The cycles are ordered as $\nu_1 = (1347), \nu_2 = (2), \nu_3 = (59), \nu_4 = (6), \nu_5 = (8)$, then $U_1 = \{3,4\}$ since $\sigma_1$ contains $\nu_3$ and $\nu_4$; similarly, $U_2 = \{1\}, U_3 = \{2,5\}$.
        \end{enumerate}
    \end{example}
    \begin{proof}[Proof of Lemma \ref{lem:non-degradable}]
        We will construct a bijection between 
        \begin{itemize}
            \item pairs $(w,T)$ where $w\in\SSS_n$ and $T$ is a non-degradable tubing of $P$ with $k$ tubes, and
            \item pairs $(\omega,T')$ where $\omega$ is a permutation in $\SSS_n$ with $\ell$ cycles and $T'$ is a tubing of $P_\ell$ with $k$ tubes.
        \end{itemize}
        Our construction of $T'$ from $T$ uses the decomposition in Section \ref{subsubsec:decom}. We denote the set of good tubes of $T$ by $\Tgood$ and the set of bad tubes of $T$ by $\Tbad$. In this case, we do not have tubes $\tau \subseteq S$ because $T$ is non-degradable. Hence, we can keep $\Tgood$ for $T'$. Then, we decompose $\Tbad$ into a triple $(\mathcal L, \mathcal M, \mathcal U)$ where $\mathcal L$ and $\mathcal U$ are nested sequences of sets, some of which may be marked, contained in $P-S$ and $\mathcal M$ is an ordered set partition of $S = C_n$. Finally, we construct $\Tbad'$ from a triple $(\mathcal L, \mathcal M', \mathcal U)$, where $\mathcal M'$ is an ordered set partition of some $A_\ell$ and $|\mathcal{M}'| = \mathcal{M}$, and have $T' = \Tgood \sqcup \Tbad'$. Hence, our bijection between $(w,T)$ and $(\omega,T')$ comes down to a bijection between $(w,\mathcal{M})$ and $(\omega, \mathcal{M}')$, where $|\mathcal{M}'| = \mathcal{M}$.

        Since $S = C_n$, there is an easy one-to-one correspondence between sequences $\mathcal{M}$ of $S$ and compositions $\alpha$ of $n$. On the other hand, any ordered set partition of $A_\ell$ is an ordered set partition $U$ of $\{1,\ldots,\ell\}$. Therefore, a bijection between $(w,\mathcal{M})$ and $(\omega,\mathcal{M}')$, where $|\mathcal{M}'| = \mathcal{M}$, is essentially a bijection between $(w,\alpha)$ and $(\omega, U)$, which is the bijection discussed at the beginning of the section.
    \end{proof}

\section{Proof of Proposition \ref{prop:better_identity}}\label{sec:proof-of-prop}

    Recall that Proposition \ref{prop:better_identity} states that for all $n$,
    \begin{equation*}
        \sum_{w\in \SSS_n} t^{\ell_w}G_{w}(x) = \sum_{w\in\SSS_n}t(t+x)\ldots(t+(\ell_w -1)x)\FT_{w}(x).
    \end{equation*}
    To begin our proof, let us rewrite equation (\ref{eqn:better_identity}) slightly. Let $(\lambda_1,\ldots,\lambda_\ell)$ be the cycle type of a permutation $w\in\SSS_n$. Then
    \[ \dfrac{t^{\ell_w}G_{w}(x)}{x^n} = \dfrac{t^{\ell_w}}{x^{\ell_w}}\cdot\dfrac{G_{\lambda_1}(x)}{x^{\lambda_1 - 1}}\cdots\dfrac{G_{\lambda_\ell}(x)}{x^{\lambda_\ell - 1}}. \]
    Replacing $\dfrac{t}{x}$ by $t$ and $x$ by $\dfrac{1}{x}$, then this becomes
    \[ t^{\ell_w}G^\rev_w(x). \]
    Similarly, after dividing by $x^n$ then replacing $\dfrac{t}{x}$ by $t$ and $x$ by $\dfrac{1}{x}$, $t(t+x)\ldots(t+(\ell_w -1)x)\FT_{w}(x)$ becomes
    \[ t(t+1)\ldots(t+\ell_w-1)\FT_w^\rev(x). \]
    Thus, dividing both side of equation (\ref{eqn:better_identity}) by $x^n$ then replacing $\dfrac{t}{x}$ by $t$ and $x$ by $\dfrac{1}{x}$, we get the following equation
    \begin{equation}\label{eqn:reverse-identity}
        \sum_{w\in \SSS_n} t^{\ell_w}G^\rev_{w}(x) = \sum_{w\in\SSS_n}t(t+1)\ldots(t+\ell_w -1)\FT^\rev_{w}(x).
    \end{equation}
    
    \begin{example}\label{ex:rev-3}
        For $n = 3$, the LHS of (\ref{eqn:reverse-identity}) is
        \[ t^3 + 3t^2(2x + 1) + 2t(6x^2 + 6x + 1), \]
        and the RHS is
        \[ t(t+1)(t+2) + 3t(t+1)(2x) + 2t(6x^2+3x). \]
        One can check that they are equal.
    \end{example}

    A \textit{directed cycle} of size $k$, denoted $\DC_k$, is a graph on $k$ vertices $v_1,v_2,\ldots,v_k$ with directed edges $v_1 \rightarrow v_2 \rightarrow \ldots \rightarrow v_k \rightarrow v_1$. Note that the directed cycle of size $1$ is the graph $v_1 \rightarrow v_1$, and the directed cycle of size 2 is the graph $v_1\rightarrow v_2 \rightarrow v_1$. Recall that the number of tubings of $\DC_k$ with $i$ tubes is the coefficient of $x^i$ in $G^{\rev}_k(x)$.

    \begin{definition}\label{def:DC}
        We define $\DCl_{\ell,n}$ to be the set of all graph $G$ such that $G$ has $n$ vertices labelled $\{1,2,\ldots,n\}$ and is a disjoint union of $\ell$ directed cycles.
    \end{definition}
    
    We have the following interpretation for the coefficient of $t^\ell x^k$ in the LHS of (\ref{eqn:reverse-identity}).

    \begin{lemma}\label{lem:LHS-count}
        The coefficient of $t^\ell x^k$ in the LHS of (\ref{eqn:reverse-identity}) counts the number of tubings with $k$ tubes of graphs in $\DCl_{\ell,n}$.
    \end{lemma}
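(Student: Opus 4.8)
The plan is to unwind the definitions of the left-hand side and identify each monomial $t^\ell x^k$ with a combinatorial object. The LHS is $\sum_{w \in \SSS_n} t^{\ell_w} G^\rev_w(x)$, and since $G^\rev_w(x) = G^\rev_{\lambda(w)}(x) = \prod_i G^\rev_{\lambda_i}(x)$ depends only on the cycle type of $w$, I would first group the permutations by cycle type. The key numerical fact is that the number of permutations in $\SSS_n$ of a fixed cycle type $\lambda = (\lambda_1, \ldots, \lambda_\ell)$ equals the number of ways to partition the label set $\{1,\ldots,n\}$ into blocks of sizes $\lambda_1, \ldots, \lambda_\ell$ and place a directed-cyclic structure on each block; this is precisely the standard correspondence between permutations and disjoint unions of labelled directed cycles. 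Thus a permutation $w$ with $\ell_w = \ell$ corresponds bijectively to a graph $G \in \DCl_{\ell,n}$, where the $\ell$ cycles of $w$ become the $\ell$ directed-cycle components of $G$ on the same vertex labels.

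With this correspondence in hand, the next step is to interpret the factor $G^\rev_w(x)$. The excerpt states explicitly that the number of tubings of $\DC_k$ with $i$ tubes is the coefficient of $x^i$ in $G^\rev_k(x)$. Since tubings of a disjoint union of graphs are just tuples of tubings of the components (a tube of a disconnected graph lies entirely within one component, and tubes in different components are automatically compatible), the generating function counting tubings of $G$ by number of tubes is the product $\prod_i G^\rev_{\lambda_i}(x) = G^\rev_w(x)$. Therefore, for a fixed $G \in \DCl_{\ell,n}$, the coefficient of $x^k$ in its tubing generating function counts tubings of $G$ with exactly $k$ tubes, and the factor $t^{\ell_w} = t^\ell$ records the number of cycle components. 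Summing over all $w$ of cycle type with $\ell$ parts — equivalently over all $G \in \DCl_{\ell,n}$ — shows that the coefficient of $t^\ell x^k$ in the LHS is exactly $\sum_{G \in \DCl_{\ell,n}} \#\{\text{tubings of } G \text{ with } k \text{ tubes}\}$, which is the claim.

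I would organize the proof as two clean reductions: first, extract the coefficient of $t^\ell$ by collecting the permutations with exactly $\ell$ cycles, and second, match the resulting sum of products $\sum_{\lambda \vdash n,\, \ell(\lambda)=\ell} (\#\text{perms of type } \lambda)\cdot \prod_i G^\rev_{\lambda_i}(x)$ against the enumeration of $\DCl_{\ell,n}$ graphs weighted by their tubing generating function. The multiplicativity of tubings over connected components is the load-bearing step that converts the product $\prod_i G^\rev_{\lambda_i}(x)$ into a single tubing count on the disjoint union. I expect the main (though modest) obstacle to be stating the permutation-to-labelled-directed-cycle-forest bijection carefully enough that the multiplicity $n!/(\prod_i \lambda_i \cdot \prod_j c_j!)$ of each cycle type matches the number of graphs in $\DCl_{\ell,n}$ with components of those sizes — one must be careful that a single abstract cycle of length $\lambda_i$ on a fixed label set corresponds to $(\lambda_i - 1)!$ directed cycles, and that relabelling across equal-sized blocks is correctly accounted for by the $c_j!$ factors. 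Once the counting is aligned, the rest follows formally from the stated tubing interpretation of $G^\rev_k$.
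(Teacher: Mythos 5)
Your proposal is correct and takes essentially the same approach as the paper's proof: both extract the coefficient of $t^\ell$ from the permutations with exactly $\ell$ cycles, identify each such $w$ with the graph $\DCw_w\in\DCl_{\ell,n}$ via the standard permutation-to-labelled-directed-cycles bijection, and use the multiplicativity of the tubing generating function over components to read the coefficient of $x^k$ off $G^\rev_w(x)$. The only difference is that your final multiplicity check (matching $n!/\bigl(\prod_i\lambda_i\cdot\prod_j c_j!\bigr)$ against the number of graphs of a given component-size profile) is unnecessary, since the correspondence $w\mapsto \DCw_w$ is a bijection permutation-by-permutation, which is how the paper sidesteps all cycle-type counting.
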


    \begin{proof}
        The term $t^\ell x^k$ only comes from permutations with $\ell$ cycles, i.e. it comes from the partial sum
        \[ \sum_{\substack{w\in\SSS_n \\ \ell_w = \ell}}t^\ell G^\rev_{w}(x). \] 

        For each permutation $w$ with cycle type $(\lambda_1,\ldots,\lambda_\ell)$, the cycles of $w$ determine the graph $\DCw_w$ in the canonical way: for each cycle $(w_{i_1}w_{i_2}\ldots w_{i_z})$, draw a directed cycle $w_{i_1} \rightarrow w_{i_2} \rightarrow \ldots \rightarrow w_{i_z} \rightarrow w_{i_1}$. Clearly, $\DCw_w$ is a disjoint union of $\ell$ directed cycles $\DC_{\lambda_1},\ldots,\DC_{\lambda_\ell}$ and has $n$ vertices labelled $\{1,2,\ldots,n\}$. Conversely, every graph in $\DCl_{\ell,n}$ is a graph $\DCw_w$ for some permutation $w\in\SSS_n$ with $\ell$ cycles by reversing the above construction. Thus,
        \[ \DCl_{\ell,n} = \bigcup_{\substack{w\in\SSS_n \\ \ell_w = \ell}}\DCw_w. \]

        Finally, in $G^\rev_w(x)$, $G^\rev_{\lambda_i}$ counts tubings in $\DC_{\lambda_i}$ by the number of tubes. Thus, the coefficients of $x^k$ in $G^\rev_w(x)$ counts tubings in $\DCw_w$ with $k$ tubes. Summing over all $w\in\SSS_n$ with $\ell_w = \ell$, we get the desired sum.
    \end{proof}

    \begin{example}\label{ex:DCEx}
        The coefficient of $tx$ in the LHS of Example \ref{ex:rev-3} is $12$. Figure \ref{fig:DCEx} shows the 12 tubings with one tube on graphs in $\DCl_{1,3}$. The graph in the two columns on the left corresponds to the permutation $(123)$, and the other graph corresponds to the permutation $(132)$.
        
        \begin{figure}[h!]
            \centering
            \includegraphics[scale = 0.5]{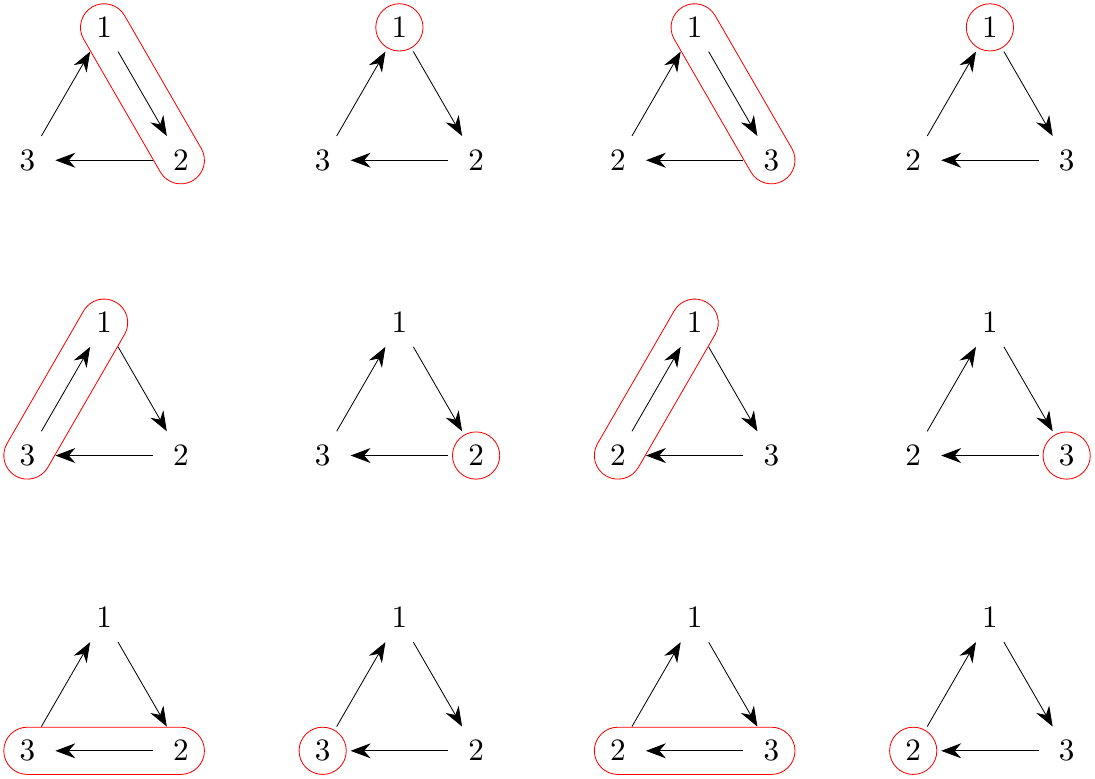}
            \caption{Tubings on graphs in $\DCl_{1,3}$}
            \label{fig:DCEx}
        \end{figure}
    \end{example}

    Now we move to the RHS of (\ref{eqn:reverse-identity}). A \textit{directed path} of size $k$, denoted $\DP_k$, is a graph on $k$ vertices $v_1,v_2,\ldots,v_k$ with directed edges $v_1 \rightarrow v_2 \rightarrow \ldots \rightarrow v_k$. Clearly, each directed path has a unique source and a unique sink. We say a tubing of $\DP_k$ is \textit{bottom-excluding} if is has the tube $\{v_2,\ldots,v_k\}$. Recall that the number of tubings of $\DP_{k-1}$ with $i$ tubes is the coefficient of $x^i$ in $F^\rev_{k-1}(x)$, so the number of bottom-excluding tubings of $\DP_k$ with $i$ tubes is the coefficient of $x^i$ in $xF^\rev_{k-1}(x)$. Similarly, for a graph $G$ that is a disjoint union of directed paths, we say a tubing of $G$ is bottom-excluding if the tubes in each directed path form a bottom-excluding tubing.

    \begin{definition}\label{def:DP}
        We define $\DPl_{\ell,n}$ to be the set of all graph $G$ such that $G$ has $n$ vertices labelled $\{1,2,\ldots,n\}$ and is a disjoint union of $\ell$ directed paths. Furthermore, for each permutation $w$, we define $\DPw_w$ to be the set of all graphs obtained from $\DCw_w$ by removing exactly one edge from each directed cycle.
    \end{definition}

    \begin{example}
        We have $\DCw_{(123)}$ is the graph with a directed cycle $1\rightarrow 2 \rightarrow 3 \rightarrow 1$. Then $\DPw_{(123)}$ is the set of three graphs: $1\rightarrow 2 \rightarrow 3$, $2 \rightarrow 3 \rightarrow 1$, and $3 \rightarrow 1\rightarrow 2$.
    \end{example}

    \begin{lemma}\label{lem:DP-count}
        Let $w$ be a permutation with cycle type $(\lambda_1,\lambda_2,\ldots,\lambda_\ell)$, then $\DPw_w$ has $\lambda_1\lambda_2\ldots\lambda_\ell$ graphs. Each graph in $\DPw_w$ is a disjoint union of $\DP_{\lambda_1}, \DP_{\lambda_2} \ldots, \DP_{\lambda_\ell}$. Furthermore,
        \[ \DPl_{\ell,n} = \bigcup_{\substack{w\in\SSS_n \\ \ell_w = \ell}} \DPw_w. \]
    \end{lemma}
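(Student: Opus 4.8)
The plan is to dispatch the three assertions in turn, with everything resting on one elementary observation: deleting a single edge from a directed cycle produces a directed path, and this operation is reversed by reattaching the edge running from the sink back to the source. I would treat the count and the structural claim for $\DPw_w$ together, and then prove the set equality by two inclusions.

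First I would recall that $\DCw_w$ is the disjoint union of directed cycles $\DC_{\lambda_1},\ldots,\DC_{\lambda_\ell}$, where $\DC_{\lambda_j}$ has exactly $\lambda_j$ directed edges. Deleting one edge of $\DC_{\lambda_j}$, say $v_i \to v_{i+1}$ (indices mod $\lambda_j$), leaves precisely the directed path $v_{i+1}\to v_{i+2}\to\cdots\to v_i$ on the same $\lambda_j$ vertices, that is, a copy of $\DP_{\lambda_j}$. Distinct deleted edges leave distinct edge sets, so there are exactly $\lambda_j$ resulting paths, and the choices in different cycles are independent because the cycles are vertex-disjoint. Hence $\DPw_w$ has $\lambda_1\lambda_2\cdots\lambda_\ell$ graphs, each a disjoint union of $\DP_{\lambda_1},\ldots,\DP_{\lambda_\ell}$. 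I would check the degenerate cases: $\lambda_j=1$, where $\DC_1$ is the self-loop $v_1\to v_1$ and deletion yields the single vertex $\DP_1$; and $\lambda_j=2$, where the two edges give two distinct copies of $\DP_2$.

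For the equality I would prove both inclusions. The inclusion $\bigcup_{w\in\SSS_n,\ \ell_w=\ell}\DPw_w \subseteq \DPl_{\ell,n}$ is immediate from the previous step, since every graph in $\DPw_w$ is a disjoint union of $\ell$ directed paths on the labelled set $\{1,\ldots,n\}$. For the reverse inclusion, take $G\in\DPl_{\ell,n}$, a disjoint union of $\ell$ directed paths. Each directed path has a unique source and a unique sink, so I can close it into a directed cycle by adding the single edge from its sink to its source; applying this to all $\ell$ paths yields a disjoint union of $\ell$ directed cycles on $\{1,\ldots,n\}$, which is $\DCw_w$ for the permutation $w$ read off from these cycles, and $w$ has $\ell$ cycles. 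By construction $G$ is recovered from $\DCw_w$ by deleting exactly the added edge from each cycle, so $G\in\DPw_w$ with $\ell_w=\ell$.

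The one point I would be most careful about is that the closing-up map is well defined and genuinely inverse to edge deletion, because this is what makes the union \emph{disjoint}: each $G$ determines its cycle graph, and hence $w$, uniquely. The uniqueness of the source and sink forces the closing edge, so the reconstruction $G\mapsto\DCw_w$ is single-valued and recovers $w$ unambiguously; together with the two inclusions this gives the stated equality. I would flag the disjointness explicitly, since it is exactly what legitimizes the term-by-term comparison of coefficients used afterward in the proof of Proposition \ref{prop:better_identity}.
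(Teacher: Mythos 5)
Your proof is correct and takes essentially the same approach as the paper's (much terser) argument: count the edge deletions cycle by cycle, then establish the set equality by two inclusions using the close-up map that adds the edge from each path's sink back to its source. Your added care is sound and even clarifying --- the closing edge is indeed forced by the uniqueness of source and sink, which gives the disjointness implicitly used later in Lemma~\ref{lem:RHS-count}, and your edge direction (sink to source) is the correct one, whereas the paper's phrasing ``from the source to the sink'' is a small slip.
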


    \begin{proof}
        The first statement follows from basic counting, and the second follows from the definition. For the last statement, clearly $\DPw_w \subset \DPl_{\ell,n}$ for all $w\in\SSS_n$ with $\ell$ cycles. On the other hand, for every graph $G\in\DPl_{\ell,n}$, adding a directed edge from the source to the sink of each directed path in $G$ gives $\DCw_w$ for some $w\in\SSS_n$ with $\ell$ cycles.
    \end{proof}
    
    We have the following interpretation for the coefficient of $t^\ell x^k$ in the LHS of (\ref{eqn:reverse-identity}).

    \begin{lemma}\label{lem:RHS-count}
        The coefficient of $t^\ell x^k$ in the RHS of (\ref{eqn:reverse-identity}) counts pairs of $(T, \sigma)$, where $T$ is a bottom-excluding tubing with $k$ tubes of some graph in $\DPl_{r,n}$ for some $r\geq\ell$, and $\sigma$ is a permutation in $\SSS_r$ with $\ell$ cycles.
    \end{lemma}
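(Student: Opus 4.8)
The plan is to extract the coefficient of $t^\ell x^k$ from the right-hand side of (\ref{eqn:reverse-identity}) by treating the two factors of each summand separately: the rising factorial $t(t+1)\cdots(t+\ell_w-1)$ supplies the powers of $t$, while $\FT^\rev_w(x)$ supplies the powers of $x$. Since the rising factorial has degree $\ell_w$, only permutations $w$ with $\ell_w\geq\ell$ contribute, and this is precisely what will force the constraint $r\geq\ell$ in the statement.

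For the $t$-part, recall that $s_{\ell_w,\ell}$ is the coefficient of $t^\ell$ in $t(t+1)\cdots(t+\ell_w-1)$, and that $s_{m,\ell}$ counts the permutations in $\SSS_m$ with $\ell$ cycles. Thus the coefficient of $t^\ell$ in the summand indexed by $w$ equals the number of $\sigma\in\SSS_{\ell_w}$ with $\ell$ cycles; this $\sigma$ is exactly the permutation appearing in the pair $(T,\sigma)$, with $r=\ell_w$.

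For the $x$-part, I would first record the elementary identity $\FT^\rev_n(x)=n\,x\,F^\rev_{n-1}(x)$ for $n\geq 2$, together with $\FT^\rev_1(x)=1$, both of which follow directly from $\FT_n(x)=nF_{n-1}(x)$ by reversing. Comparing with the discussion preceding Definition~\ref{def:DP}, this says that $\FT^\rev_{\lambda_j}(x)$ equals $\lambda_j$ times the generating function, by number of tubes, for bottom-excluding tubings of $\DP_{\lambda_j}$. Multiplying over the cycles of a permutation $w$ with cycle type $(\lambda_1,\ldots,\lambda_{\ell_w})$, the numerical factor $\lambda_1\cdots\lambda_{\ell_w}$ is precisely the number of graphs in $\DPw_w$ (Lemma~\ref{lem:DP-count}), one for each choice of edge to delete from each directed cycle, while the remaining product is the generating function for bottom-excluding tubings. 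Hence $[x^k]\,\FT^\rev_w(x)$ counts the pairs $(G,T)$ with $G\in\DPw_w$ and $T$ a bottom-excluding tubing of $G$ having $k$ tubes. Summing over all $w$ with $\ell_w=r$ and invoking the decomposition $\DPl_{r,n}=\bigcup_{w:\ell_w=r}\DPw_w$ of Lemma~\ref{lem:DP-count}, which is a disjoint union since closing up each path recovers a unique $\DCw_w$, this becomes the number of bottom-excluding tubings with $k$ tubes of graphs in $\DPl_{r,n}$.

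Combining the two parts, the coefficient of $t^\ell x^k$ on the right-hand side equals $\sum_{r\geq\ell}s_{r,\ell}\cdot\big(\#\text{ bottom-excluding tubings with }k\text{ tubes of graphs in }\DPl_{r,n}\big)$, which is exactly the number of pairs $(T,\sigma)$ described in the lemma. The main obstacle is the bookkeeping in the $x$-part: one must verify that the factor $\lambda_1\cdots\lambda_{\ell_w}$ produced by $\FT^\rev_n=n\,x\,F^\rev_{n-1}$ matches bijectively the edge-deletions enumerated by $\DPw_w$, handling the degenerate fixed-point case $\lambda_j=1$, where $\DP_1$ carries only the empty tubing and $\FT^\rev_1=1$, and that the union over $w$ is genuinely disjoint and exhaustive. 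This mirrors the argument for the left-hand side in Lemma~\ref{lem:LHS-count}, with directed cycles replaced by their cut-open directed paths.
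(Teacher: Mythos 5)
Your proposal is correct and follows essentially the same route as the paper's proof: extract the $t$-part via the Stirling number interpretation of the rising factorial, and interpret $[x^k]\,\FT^\rev_w(x)$ as counting bottom-excluding tubings over the $\lambda_1\cdots\lambda_r$ graphs of $\DPw_w$, then sum over $r\geq\ell$ using $\DPl_{r,n}=\bigcup_{\ell_w=r}\DPw_w$. Your explicit treatment of the fixed-point case $\lambda_j=1$ and of the disjointness of that union is a welcome bit of extra care that the paper leaves implicit, but it does not change the argument.
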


    \begin{proof}
        The term $t^\ell x^k$ in the RHS of (\ref{eqn:reverse-identity}) comes from permutations with $r\geq \ell$ cycles, i.e. it comes from the partial sum
        \[ \sum_{r\geq\ell}\sum_{\substack{w\in\SSS_n \\ \ell_w = r}}t(t+1)\ldots(t+r -1)\FT^\rev_{w}(x). \]
        
        For each permutation $w$ with cycle type $(\lambda_1,\ldots,\lambda_r)$, where $r\geq \ell$, recall that the coefficient of $t^{\ell}$ in $t(t+1)\ldots(t+r-1)$ counts the number of permutations in $\SSS_r$ with $\ell$ cycle. We claim that the coefficient of $x^k$ in $\FT^\rev_w(x)$ counts the number of bottom-excluding tubings with $k$ tubes of some graph in $\DPw_w$.
        
        Indeed, we rewrite $\FT^\rev_w(x)$ slightly as
        \[ \lambda_1\ldots\lambda_r x^r F^\rev_{\lambda_1 - 1}(x)\ldots F^\rev_{\lambda_r - 1}(x). \]
        Notice that $\lambda_1\ldots\lambda_r$ is the size of $\DPw_w$. For each graph $G\in\DPw_w$, $xF^\rev_{\lambda_i-1(x)}$ counts bottom-excluding tubings of $\DP_{\lambda_i}$ by the number of tubes. Hence, $x^r F^\rev_{\lambda_1 - 1}(x)\ldots F^\rev_{\lambda_r - 1}(x)$ counts bottom-excluding tubings of $G$ by the number of tubes. This means that for every $r\geq\ell$, the coefficient of $t^\ell x^k$ in
        \[ \sum_{\substack{w\in\SSS_n \\ \ell_w = r}}t(t+1)\ldots(t+\ell_w -1)\FT^\rev_{w}(x) \]
        counts pairs of $(T, \sigma)$, where $T$ is a bottom-excluding tubing with $k$ tubes of some graph in $\DPl_{r,n}$, and $\sigma$ is a permutation in $\SSS_r$ with $\ell$ cycles. Summing over all $r\geq\ell$, this completes the proof.
    \end{proof}

    \begin{example}\label{ex:DPEx}
        The coefficient of $tx$ in the RHS of Example \ref{ex:rev-3} is $12$. In Figure \ref{fig:DPEx}, the first and third columns show bottom-excluding tubings of graphs in $\DP_{1,3}$. They are paired with the only permutation in $\SSS_1$ with $1$ cycle, the identity permutation, giving $6$ pairs. The second and fourth columns show bottom-excluding tubings of graphs in $\DP_{2,3}$. They are paired with the only permutation in $\SSS_2$ with $1$ cycle, the $(12)$ permutation, giving another $6$ pairs.
        
        \begin{figure}[h!]
            \centering
            \includegraphics[scale = 0.46]{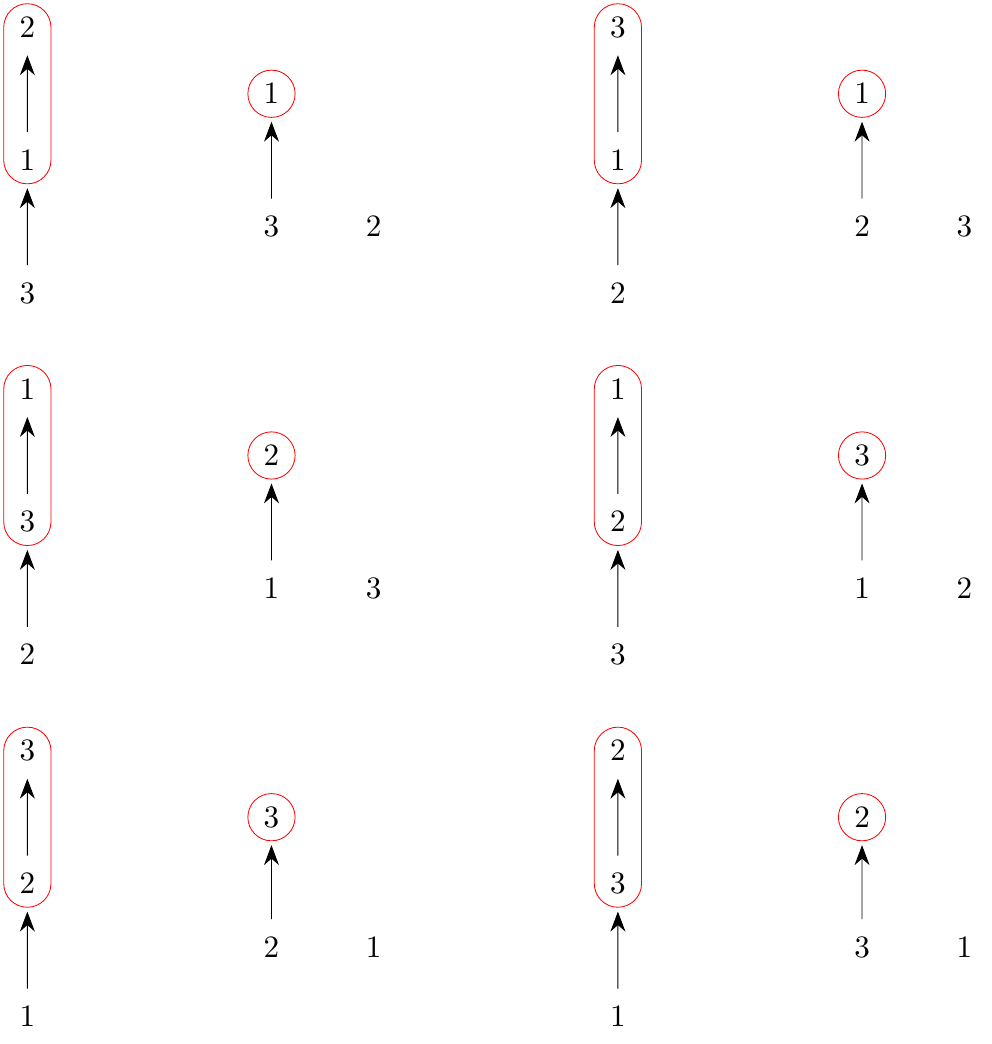}
            \caption{Bottom-excluding tubings on graphs in $\DP_{1,3}$ and $\DP_{2,3}$}
            \label{fig:DPEx}
        \end{figure}
    \end{example}

    Now we prove equation (\ref{eqn:reverse-identity}).

    \begin{prop}\label{prop:DC-DP-biject}
        The numbers of
        \begin{enumerate}
            \item tubings with $k$ tubes of graphs in $\DCl_{\ell,n}$; and
            \item pairs of $(T, \sigma)$, where $T$ is a bottom-excluding tubing with $k$ tubes of some graph in $\DPl_{r,n}$ for some $r\geq\ell$, and $\sigma$ is a permutation in $\SSS_r$ with $\ell$ cycles
        \end{enumerate}
        are the same.
    \end{prop}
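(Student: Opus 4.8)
The plan is to establish a direct bijection between the two sets, reading them combinatorially through Lemmas~\ref{lem:LHS-count} and~\ref{lem:RHS-count}. On the left we have a tubing $U$ with $k$ tubes of a graph $H \in \DCl_{\ell,n}$; since $H$ is a disjoint union of $\ell$ directed cycles, it equals $\DCw_w$ for a unique $w \in \SSS_n$ with $\ell_w = \ell$, and $U$ restricts to one tubing on each directed cycle. On the right I must produce a graph $G \in \DPl_{r,n}$ (a disjoint union of $r$ directed paths, hence $G \in \DPw_{w'}$ for a unique $w'$ with $\ell_{w'} = r$), a bottom-excluding tubing $T$ of $G$ with $k$ tubes, and a permutation $\sigma \in \SSS_r$ with $\ell$ cycles. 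My strategy is to treat each directed cycle of $H$ separately: a single cycle with its tubing will be cut into several directed paths carrying bottom-excluding tubings, and the cyclic order in which these paths sit around the original cycle becomes one cycle of $\sigma$. Thus the $\ell$ cycles of $H$ produce the $\ell$ cycles of $\sigma$, while the total number $r$ of paths is the number of points permuted by $\sigma$.

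First I would describe the single-cycle map. Fix a directed cycle $\DC_m$ with vertices in cyclic order and a tubing, and call a vertex \emph{exposed} if it lies in no tube. The key structural observation is that every maximal run of consecutive covered (non-exposed) vertices is \emph{exactly one tube}. Indeed, two disjoint tubes of a cycle are compatible only when their union is disconnected; cyclically adjacent arcs inside a covered run have connected union, so a run cannot contain two or more maximal tubes, and its single maximal tube cannot be all of $V$. In particular $U$ always has at least one exposed vertex, since otherwise $V$ itself would be a single tube. I then cut $\DC_m$ at each exposed vertex by deleting the edge entering it, making that vertex a source; this breaks the cycle into $c$ directed paths (one per exposed vertex). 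On each path the covered run is the top tube (everything but the source) and the tubes inside that run form the remaining tubing, so each path inherits a bottom-excluding tubing, and the total number of tubes is unchanged.

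Next I would assemble the global output. Running the single-cycle map on every directed cycle of $H$ yields $r = \sum_i c_i$ directed paths on $[n]$, forming a graph $G \in \DPl_{r,n}$ with a bottom-excluding tubing $T$ having $k$ tubes. Ordering the cycles of the associated $w'$ by least element labels the $r$ paths by $\{1,\dots,r\}$ as in Lemma~\ref{lem:DP-count}, and recording, for each original cycle of $H$, the cyclic order of its paths gives a permutation $\sigma \in \SSS_r$ with exactly $\ell$ cycles. The inverse reverses this: given $(T,\sigma)$, each cycle of $\sigma$ prescribes a cyclic list of tubed paths, which I glue into a directed cycle by joining the sink of each path to the source of the next. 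The top tubes, being separated by the reinstated sources (which remain exposed), are pairwise separated arcs and hence compatible, so the union of all path-tubings is a valid tubing of the resulting disjoint union of $\ell$ directed cycles. These maps are mutually inverse and preserve the number of tubes, giving the asserted equality of counts.

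The hard part will be the structural observation together with checking that the gluing step always returns a \emph{valid} tubing: one must verify that distinct top tubes stay compatible after gluing (which holds because each is flanked by an exposed source), that internal sub-tubings remain correctly nested, and that size-one paths (bare sources contributing no tube) are handled consistently. A secondary point requiring care is the bookkeeping of labels, namely that the canonical least-element ordering of the paths used to define $\sigma$ is compatible with the reconstruction of $w'$ and of $w$, so that the correspondence $(w,U) \leftrightarrow (w',G,T,\sigma)$ is genuinely a bijection rather than merely count-preserving.
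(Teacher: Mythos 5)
Your proposal is correct and takes essentially the same approach as the paper's proof: you cut each directed cycle by deleting the edge entering every lonely (exposed) vertex, record the cyclic gluing pattern of the paths (ordered by least element) as the permutation $\sigma$, and invert by joining the sink of each path to the source of the next one prescribed by $\sigma$. Your structural observation that each maximal covered run is a single tube is exactly the paper's argument that between consecutive maximal tubes of a cycle there is a lonely vertex, which yields bottom-exclusion, so the two bijections coincide.
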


    \begin{proof}
        We will construct a bijection between the two sets. Recall that we call a vertex \textit{lonely} if it is not in any tube, and a tube \textit{maximal} if it is not contained in any other tube. Given a tubing with $k$ tubes of a graph $G \in \DCl_{\ell, n}$, we construct a pair of $(T,\sigma)$ as follows.

        \begin{enumerate}
            \item  For each lonely vertex in $G$, remove the edge coming into it. This does not break connectivity of any tube, so we can keep the tubes the same. After this step, we have a tubing $T$ of some graph in $\DPl_{r,n}$ for some $r\geq\ell$.
            \item Order the directed paths in increasing order of their smallest vertices and construct $\sigma\in\SSS_r$ as follows: if in $G$ there is an arrow from the sink of the $i$th directed path to the source of the $j$th directed path, $\sigma_i = j$.
        \end{enumerate}

        First, we claim that $T$ is bottom-excluding. This is because by the definition of tubings, between every two consecutive maximal tubes in a (directed) cycle, there is at least one lonely vertex. Thus, there is a tube containing every vertex between two consecutive lonely vertices, unless they are next to each other. Hence, after removing the edges, in every directed path, there is a tube containing every vertex except the source. Thus, $T$ is bottom-excluding.

        Furthermore, there are exactly $\ell$ directed cycles in $G$, so the resulting permutation $\sigma$ has exactly $\ell$ cycles. Hence, the pair $(T,\sigma)$ satisfies the requirements.

        The inverse map is also straightforward. Given a pair $(T,\sigma)$, where $T$ is a bottom-excluding tubing with $k$ tubes of some graph $G'\in\DPl_{r,n}$ for some $r\geq\ell$, and $\sigma$ is a permutation in $\SSS_r$ with $\ell$ cycles, we first order the directed paths in $G'$ in increasing order of their smallest vertices. Then, we add an arrow from the sink of the $i$th directed path to the source of the $\sigma_i$th directed path and keep the tubes the same. Since $\sigma$ has $\ell$ cycles, the resulting graph is in $\DCl_{\ell,n}$. In addition, since $T$ is bottom-excluding, there cannot be adjacent maximal tubes in the resulting graph, so this is a valid tubing. This completes the proof.
    \end{proof}

    \begin{example}
        One can see examples of step 1 by checking tubings at the same position in Figure \ref{ex:DCEx} and \ref{ex:DPEx}.
    \end{example}

    \begin{example}
        Figure \ref{fig:tube-bijection} gives another example of the bijection.
        
        \begin{figure}[h!]
            \centering
            \includegraphics[scale = 0.35]{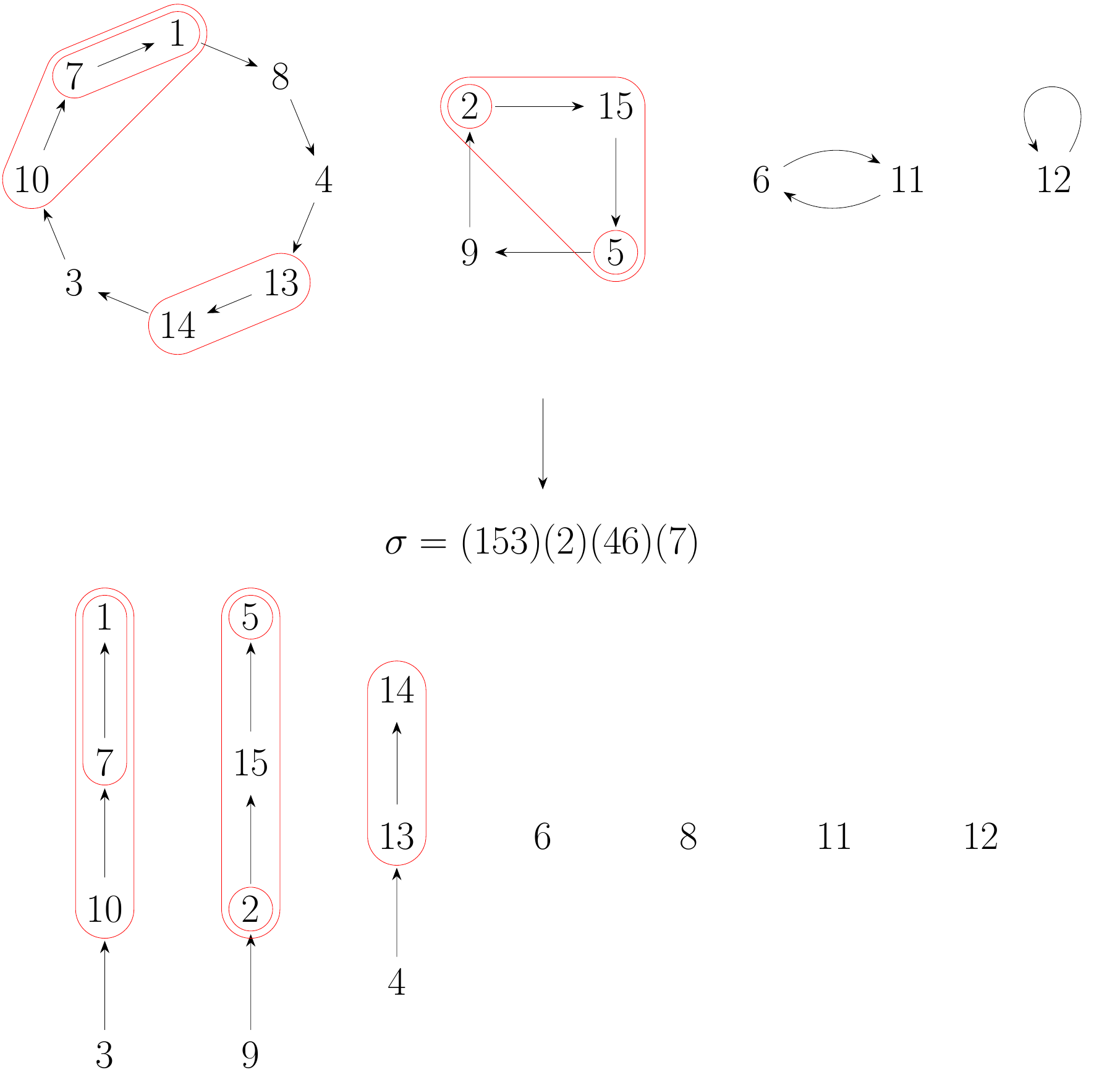}
            \caption{Example of the bijection}
            \label{fig:tube-bijection}
        \end{figure}
    \end{example}

\section{Narayana polynomials and Eulerian polynomials identities}\label{sec:corollary}

    When applying Theorem \ref{thm:recurrence} to \textit{broom posets}, one obtains several identities involving Eulerian polynomials and descent generating functions of stack-sorting preimages. Let us first recall relevant definitions.

    The Eulerian polynomial is defined to be
    \[ E_n(x) = \sum_{w\in\SSS_n} x^{\des(w)}. \]
    Let $\oplus$ denote the ordinal sum of posets. Then, $E_n(x)$ is the $h$-polynomial of $\AAA(A_1\oplus A_n)$ as well as $\AAA(A_i\oplus A_1\oplus A_j)$ where $i+j = n$. More generally, we define
    \[ E_{m,n}(x) = \sum_{\substack{w\in\SSS_n \\ w_1 \leq m,~ w_{m+n} \geq m+1}}x^{\des(w)}. \]
    It was found by Sack in his FPSAC 2023 Extended Abstract that $E_{m,n}(x)$ is the $h$-polynomial of $A_m\oplus A_n$.

    On the other hand, recall that the Narayana polynomial is defined to be
    \[ N_n(x) = \sum_{k=0}^{n-1}\dfrac{1}{n}\binom{n}{k}\binom{n}{k+1}x^k. \]
    Since $\AAA(C_{n+1})$ is the associahedra, $N_n(x)$ is the $h$-polynomial of $\AAA(C_{n+1})$. Thus, let $P = C_{n+1}$, and $P_i = A_{1}\oplus A_i$ for $1\leq i \leq n$, Theorem \ref{thm:recurrence} gives the following identity.

    \begin{cor}\label{cor:identity1}
        For all $n$,
        \[ N_n(x) = \dfrac{1}{n!}\sum_{w\in \SSS_n} B_w(x)E_{\ell_w}(x). \]
    \end{cor}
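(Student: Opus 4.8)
The plan is to obtain Corollary~\ref{cor:identity1} as a direct specialization of Theorem~\ref{thm:recurrence}, so the work consists entirely of choosing the right poset $P$, exhibiting a suitable autonomous chain $S$, and recognizing the resulting $h$-polynomials as classical ones. I would take $P = C_{n+1}$, the chain on elements $0 \prec 1 \prec \cdots \prec n$, and let $S = \{1, 2, \ldots, n\}$ be the chain consisting of its top $n$ elements. Since $|S| = n < n+1 = |P|$, this $S$ is a proper subposet and a chain of size $n$, exactly as the theorem requires.

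First I would verify that $S$ is autonomous. The only element of $P - S$ is the minimum $0$, which satisfies $0 \preceq x$ and $x \not\preceq 0$ for every $x \in S$. Hence for all $x, y \in S$ the equivalences $(x \preceq 0 \Leftrightarrow y \preceq 0)$ and $(0 \preceq x \Leftrightarrow 0 \preceq y)$ hold trivially, both sides being simultaneously false in the first case and simultaneously true in the second. Thus $0$ ``sees'' every element of $S$ identically, and $S$ is autonomous.

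Next I would identify $P_i$. Replacing $S$ by an antichain $A_i$ while preserving its relation to $P - S = \{0\}$ leaves $0$ below all $i$ pairwise-incomparable new elements, so $P_i = A_1 \oplus A_i$ is the claw poset. Now I invoke the identifications recorded in Section~\ref{sec:corollary}: the poset associahedron $\AAA(C_{n+1})$ is the associahedron, whose $h$-polynomial is $N_n(x)$, while $\AAA(A_1 \oplus A_i)$ is the permutohedron, whose $h$-polynomial is the Eulerian polynomial $E_i(x)$. Substituting $h_P(x) = N_n(x)$ and $h_{P_{\ell_w}}(x) = E_{\ell_w}(x)$ into equation~(\ref{eqn:recurrence}) yields the claimed identity at once.

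Since everything reduces to a substitution into an already-proved theorem, there is no substantial obstacle; the only points requiring care are the (elementary) verification of autonomy and the correct bookkeeping of which classical polytope each $\AAA(P_i)$ is. As a sanity check one should inspect the degenerate endpoint $P_1 = A_1 \oplus A_1 = C_2$, whose poset associahedron is a point with $h$-polynomial $E_1(x) = 1$, confirming that the formula is consistent across all $1 \le i \le n$.
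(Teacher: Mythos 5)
Your proposal is correct and follows exactly the paper's route: the paper likewise derives Corollary~\ref{cor:identity1} as an immediate specialization of Theorem~\ref{thm:recurrence} with $P = C_{n+1}$ and $P_i = A_1 \oplus A_i$, identifying $h_P(x) = N_n(x)$ (associahedron) and $h_{P_i}(x) = E_i(x)$ (permutohedron). The only difference is that you spell out the autonomy of the top-$n$-element chain $S$ and the degenerate case $i=1$, which the paper leaves implicit.
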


    A more general version of Corollary \ref{cor:identity1} involves \textit{stack-sorting}, an algorithm first introduced by Knuth in \cite{knuth1973art} that led to the study of pattern avoidance in permutations. The deterministic version, defined by West in \cite{west1990permutations}, is as follows. Given a permutation $w \in \SSS_n$, its \textit{stack-sorting image} $s(w)$ is obtained through the following procedure. Iterate through the entries of $w$. In each iteration,
    \begin{itemize}
        \item if the stack is empty or the next entry is smaller than the entry at the top of the stack, push the next entry to the top of the stack;
        \item else, pop the entry at the top of the stack to the end of the output permutation.
    \end{itemize}
    Permutations $w$ such that $s(w) = 12\ldots n$ are called \textit{stack-sortable permutations}, whose descent generating function is also the Narayana polynomial, i.e.
    \[ N_n(x) = \sum_{w\in s^{-1}(12\ldots n)}x^{\des(w)}. \]
    More generally, let $\SSS_{n,k} = \{w~|~w\in \SSS_{n+k}, w_i = i~\text{for all}~i>k\}$, then one can define
    \[ N_{n,k}(x) = \sum_{w\in\SSS_{n,k}} x^{\des(w)}. \]
    It was showed in \cite[Theorem 4.8]{nguyen2023stack} that $N_{n,k}(x)$ is the $h$-polynomial of the poset associahedra of the broom poset $A_{n,k} = C_{n+1}\oplus A_k$. Hence, let $P = C_{n+1+k}$ and $P_i = C_{n+1}\oplus A_i$ for $1\leq i \leq k$, we have the following identity.

    \begin{cor}\label{cor:identity2}
        For all $n,k$,
        \[ N_{n+k}(x) = \dfrac{1}{k!}\sum_{w\in \SSS_k} B_w(x)N_{n,\ell_w}(x). \]
    \end{cor}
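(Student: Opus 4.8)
The plan is to obtain Corollary \ref{cor:identity2} as a direct specialization of Theorem \ref{thm:recurrence}, with the autonomous chain being the top $k$ elements of a long chain. First I would set $P = C_{n+1+k}$ and let $S$ be the subposet consisting of its top $k$ elements. Writing the chain as an ordinal sum $C_{n+1+k} = C_{n+1} \oplus C_k$, the subposet $S$ is exactly the top summand $C_k$, a chain of size $k$, and it is proper since the bottom summand $C_{n+1}$ is nonempty. The role played by ``$n$'' in the statement of Theorem \ref{thm:recurrence} is therefore played by ``$k$'' here, so the resulting sum will range over $\SSS_k$.

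Next I would verify the autonomy hypothesis, which is the one structural point that deserves an explicit check. Every $z \in P - S$ lies in the bottom summand $C_{n+1}$, so $z \preceq x$ for every $x \in S$ while $x \not\preceq z$ for every $x \in S$. Thus each element outside $S$ relates identically to all elements of $S$, which is precisely the condition $(x \preceq z \Leftrightarrow y \preceq z)$ and $(z \preceq x \Leftrightarrow z \preceq y)$ for all $x,y \in S$. Hence $S$ is a proper autonomous chain of size $k$, and Theorem \ref{thm:recurrence} applies.

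It then remains to identify the posets $P_i$ and their $h$-polynomials. Replacing $S = C_k$ by an antichain $A_i$ of size $i$ produces $P_i = C_{n+1} \oplus A_i = A_{n,i}$, the broom poset. For the $h$-polynomials, $\AAA(C_{m+1})$ is the associahedron with $h$-polynomial $N_m(x)$, so taking $m = n+k$ gives $h_P(x) = N_{n+k}(x)$; and by \cite[Theorem 4.8]{nguyen2023stack} the $h$-polynomial of $\AAA(A_{n,i})$ is $N_{n,i}(x)$, so $h_{P_i}(x) = N_{n,i}(x)$. Substituting these into equation \eqref{eqn:recurrence}, with the chain size equal to $k$ and $P_{\ell_w} = A_{n,\ell_w}$, yields $N_{n+k}(x) = \tfrac{1}{k!}\sum_{w\in\SSS_k} B_w(x) N_{n,\ell_w}(x)$, as claimed.

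There is no analytic obstacle in this argument: the entire content is the correct bookkeeping of the specialization. The only step requiring genuine attention is confirming that the top $k$ elements of $C_{n+1+k}$ form an autonomous subposet and that contracting them to an antichain returns precisely $A_{n,i}$; once these identifications are in place, the corollary follows immediately from Theorem \ref{thm:recurrence}.
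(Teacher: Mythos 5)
Your proposal is correct and takes essentially the same route as the paper, which obtains Corollary \ref{cor:identity2} precisely by specializing Theorem \ref{thm:recurrence} to $P = C_{n+1+k}$ with the autonomous chain $S = C_k$ on top, so that $P_i = C_{n+1}\oplus A_i = A_{n,i}$ and $h_{P_i}(x) = N_{n,i}(x)$ by \cite[Theorem 4.8]{nguyen2023stack}, while $h_P(x) = N_{n+k}(x)$. Your explicit check that the top $k$ elements form a proper autonomous subposet and that the replacement yields the broom poset is exactly the bookkeeping the paper leaves implicit.
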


    \begin{remark}
        In the case of $k = 2$ in Corollary \ref{cor:identity2}, we have
        \[ N_{n+2}(x) = \dfrac{1}{2}\left(N_{n,2}(x) + (1+x)N_{n+1}(x)\right). \]
        This is Proposition 5.3 in \cite{nguyen2023stack}.
    \end{remark}

    On the other hand, one can also let $P = C_{n+1}\oplus A_{k}$ and $P_i = A_i\oplus A_1 \oplus A_k$ for $1\leq i\leq n$, then one has the following identity.

    \begin{cor}\label{cor:identity3}
        For all $n,k$,
        \[ N_{n,k}(x) = \dfrac{1}{n!}\sum_{w\in \SSS_n} B_w(x)E_{\ell_w + k}(x). \]
    \end{cor}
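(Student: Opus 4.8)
The plan is to apply Theorem \ref{thm:recurrence} directly, choosing the ambient poset and the autonomous chain so that the two families of $h$-polynomials in the conclusion are exactly $N_{n,k}(x)$ and $E_{\ell_w+k}(x)$. First I would take $P = C_{n+1}\oplus A_k$, the broom poset $A_{n,k}$, and decompose its bottom chain as $C_{n+1} = C_n \oplus A_1$, so that $P = C_n \oplus A_1 \oplus A_k$. I then set $S = C_n$ to be the bottom $n$ elements of the chain. Since every element of $S$ lies strictly below every element of $P-S$ (the single top element $A_1$ of the old chain together with the antichain $A_k$), both halves of the autonomous condition, $(x \preceq z \Leftrightarrow y \preceq z)$ and $(z \preceq x \Leftrightarrow z \preceq y)$, hold trivially for all $x,y \in S$ and $z \in P-S$. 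Moreover $S$ is a proper chain of size $n$, so $S$ meets the hypotheses of Theorem \ref{thm:recurrence}.

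Next I would read off the two $h$-polynomials. On the one hand, $P = C_{n+1}\oplus A_k = A_{n,k}$ is the broom poset, so by \cite[Theorem 4.8]{nguyen2023stack} we have $h_P(x) = N_{n,k}(x)$, which is the left-hand side of the desired identity. On the other hand, replacing $S = C_n$ by an antichain $A_i$ leaves the top chain element $A_1$ and the antichain $A_k$ and their comparabilities untouched, so the resulting poset is $P_i = A_i \oplus A_1 \oplus A_k$. Using the stated fact that $E_m(x)$ is the $h$-polynomial of $\AAA(A_i \oplus A_1 \oplus A_j)$ whenever $i+j = m$, I take $j = k$ and $m = i+k$ to conclude $h_{P_i}(x) = E_{i+k}(x)$, and hence $h_{P_{\ell_w}}(x) = E_{\ell_w + k}(x)$.

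Finally I would substitute these identifications into Theorem \ref{thm:recurrence} to obtain
\[
N_{n,k}(x) \;=\; h_P(x) \;=\; \dfrac{1}{n!}\sum_{w\in \SSS_n} B_w(x)\, h_{P_{\ell_w}}(x) \;=\; \dfrac{1}{n!}\sum_{w\in \SSS_n} B_w(x)\, E_{\ell_w + k}(x),
\]
which is exactly Corollary \ref{cor:identity3}.

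The argument is essentially bookkeeping once Theorem \ref{thm:recurrence} is available, so I do not expect a serious obstacle. The two places that require care are: (i) decomposing the bottom chain as $C_n \oplus A_1$, which is what forces the replaced antichain $A_i$ to sit below exactly one further element and then below $A_k$, pinning down $P_i = A_i \oplus A_1 \oplus A_k$ rather than $A_i \oplus A_k$; and (ii) matching index conventions so that $\AAA(A_i \oplus A_1 \oplus A_k)$ carries the Eulerian polynomial $E_{i+k}(x)$ and not $E_i(x)$ or $E_{i+k+1}(x)$. Both follow from the cited facts, so the proof reduces to verifying the autonomous condition for $S$ and correctly naming the two $h$-polynomials.
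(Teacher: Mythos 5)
Your proposal is correct and takes essentially the same route as the paper, which states Corollary \ref{cor:identity3} precisely by choosing $P = C_{n+1}\oplus A_k$ (viewed as $C_n\oplus A_1\oplus A_k$ with $S = C_n$ autonomous) and $P_i = A_i\oplus A_1\oplus A_k$ in Theorem \ref{thm:recurrence}, then identifying $h_P(x) = N_{n,k}(x)$ via \cite[Theorem 4.8]{nguyen2023stack} and $h_{P_i}(x) = E_{i+k}(x)$ via Sack's result. Your two points of care --- keeping the top chain element so that $P_i$ is $A_i\oplus A_1\oplus A_k$ rather than $A_i\oplus A_k$, and matching the Eulerian index $i+k$ --- are exactly the bookkeeping the paper's one-line derivation relies on.
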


    Combining Corollary \ref{cor:identity2} and \ref{cor:identity3}, we have the following.

    \begin{cor}\label{cor:identity4}
        For all $n,k$,
        \[ N_{n+k}(x) = \dfrac{1}{n!k!}\sum_{\pi\in \SSS_n, \sigma\in\SSS_k} B_\pi(x)B_\sigma(x)E_{\ell_\pi+\ell_\sigma}(x). \]
    \end{cor}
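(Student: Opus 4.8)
The plan is to derive Corollary~\ref{cor:identity4} purely formally, by substituting the expansion of Corollary~\ref{cor:identity3} into that of Corollary~\ref{cor:identity2}; no new combinatorics or geometry is required, so the work is entirely bookkeeping. I would first record Corollary~\ref{cor:identity2}, renaming its summation index to $\sigma$ to avoid a clash with the index I introduce next:
\[ N_{n+k}(x) = \frac{1}{k!}\sum_{\sigma\in\SSS_k} B_\sigma(x)\, N_{n,\ell_\sigma}(x). \]
The key observation that makes the substitution legitimate is that every $\sigma\in\SSS_k$ has between $1$ and $k$ cycles, so $\ell_\sigma$ is always a positive integer; hence $N_{n,\ell_\sigma}(x)$ is a term to which Corollary~\ref{cor:identity3} applies, taking the second subscript there to be $\ell_\sigma$.

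Next I would expand each factor $N_{n,\ell_\sigma}(x)$ via Corollary~\ref{cor:identity3}, using a fresh index $\pi$:
\[ N_{n,\ell_\sigma}(x) = \frac{1}{n!}\sum_{\pi\in\SSS_n} B_\pi(x)\, E_{\ell_\pi+\ell_\sigma}(x). \]
Plugging this into the previous display and pulling the constant $1/n!$ out of the inner sum collapses the two nested single sums into one sum over the product $\SSS_n\times\SSS_k$, yielding
\[ N_{n+k}(x) = \frac{1}{n!\,k!}\sum_{\pi\in\SSS_n,\ \sigma\in\SSS_k} B_\pi(x)\,B_\sigma(x)\, E_{\ell_\pi+\ell_\sigma}(x), \]
which is exactly the claimed identity (and manifestly symmetric in the two groups, as it should be since $B_\pi(x)B_\sigma(x)$ and $E_{\ell_\pi+\ell_\sigma}(x)$ are both symmetric in $\pi,\sigma$).

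There is no substantive obstacle here: the argument is a single substitution. The only things worth checking, and the closest thing to a subtle point, are that the index ranges mesh correctly — namely that the cycle count $\ell_\sigma$ is always a valid value of the second subscript of $N_{n,\cdot}$ so that Corollary~\ref{cor:identity3} can be invoked termwise — and that the product of the two single sums reassembles, with no stray cross terms, into the stated double sum. One could instead prove the identity directly from Theorem~\ref{thm:recurrence} by flipping the two autonomous chains $C_{n+1}$ and $A_k$ inside $P=C_{n+1+k}$ in succession, but chaining the two established corollaries is the most economical route.
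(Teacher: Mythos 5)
Your proposal is correct and is exactly the paper's argument: the paper derives Corollary~\ref{cor:identity4} by ``combining Corollary~\ref{cor:identity2} and \ref{cor:identity3}'', i.e.\ precisely the substitution of the expansion of $N_{n,\ell_\sigma}(x)$ from Corollary~\ref{cor:identity3} into Corollary~\ref{cor:identity2} that you carry out. Your added check that $\ell_\sigma\geq 1$ keeps the ranges valid is a reasonable (if routine) point the paper leaves implicit.
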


    Combining Corollary \ref{cor:identity1} and \ref{cor:identity4} gives the following.

    \begin{cor}\label{cor:identity5}
        For all $n,k$,
        \[ \dfrac{1}{(n+k)!}\sum_{w\in\SSS_{n+k}}B_w(x)E_{\ell_w}(x) = \dfrac{1}{n!k!}\sum_{\pi\in \SSS_n, \sigma\in\SSS_k} B_\pi(x)B_\sigma(x)E_{\ell_\pi+\ell_\sigma}(x). \]
        In other words,
        \[ \sum_{w\in\SSS_{n+k}}B_w(x)E_{\ell_w}(x) = \binom{n+k}{n}\sum_{\pi\in \SSS_n, \sigma\in\SSS_k} B_\pi(x)B_\sigma(x)E_{\ell_\pi+\ell_\sigma}(x). \]
    \end{cor}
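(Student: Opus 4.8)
The plan is to observe that the statement is an immediate consequence of equating the two distinct closed forms for $N_{n+k}(x)$ furnished by Corollary \ref{cor:identity1} and Corollary \ref{cor:identity4}; no new combinatorial input is required, and the proof is purely formal manipulation.

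First I would specialize Corollary \ref{cor:identity1} by replacing $n$ with $n+k$, which yields
$$N_{n+k}(x) = \frac{1}{(n+k)!}\sum_{w\in\SSS_{n+k}}B_w(x)E_{\ell_w}(x).$$
Next I would recall Corollary \ref{cor:identity4} verbatim, which expresses the same polynomial as
$$N_{n+k}(x) = \frac{1}{n!\,k!}\sum_{\pi\in\SSS_n,\,\sigma\in\SSS_k}B_\pi(x)B_\sigma(x)E_{\ell_\pi+\ell_\sigma}(x).$$
Since both right-hand sides equal $N_{n+k}(x)$, they must coincide, and this coincidence is precisely the first displayed identity of the corollary. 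To obtain the second displayed identity, I would clear denominators by multiplying through by $(n+k)!$ and simplify the resulting constant using $\frac{(n+k)!}{n!\,k!} = \binom{n+k}{n}$.

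The main point worth flagging is that there is essentially no obstacle in this final step: all of the difficulty has already been absorbed into Corollaries \ref{cor:identity1} and \ref{cor:identity4}, which in turn rest on Theorem \ref{thm:recurrence} (applied to the chain $C_{n+k+1}$ and to the broom poset $C_{n+1}\oplus A_k$, respectively) via Proposition \ref{prop:better_identity}. The corollary is thus a cross-check that the two routes to $N_{n+k}(x)$ agree. The only thing I would take care to verify is that the two applications use compatible conventions, namely that the Eulerian polynomials $E_{\ell_w}$ appearing in Corollary \ref{cor:identity1} and $E_{\ell_\pi+\ell_\sigma}$ appearing in Corollary \ref{cor:identity4} are the same family, so that the identification of the two expressions for $N_{n+k}(x)$ is literal rather than merely up to reindexing.
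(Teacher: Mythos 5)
Your proposal is correct and matches the paper exactly: the paper derives Corollary \ref{cor:identity5} precisely by equating the expression for $N_{n+k}(x)$ from Corollary \ref{cor:identity1} (with $n$ replaced by $n+k$) with that from Corollary \ref{cor:identity4}, and the second display is just clearing denominators via $\frac{(n+k)!}{n!\,k!}=\binom{n+k}{n}$. Your check that the Eulerian polynomials in the two corollaries are the same family is a reasonable precaution but immediate, since both stem from the same $h$-polynomial interpretation $E_m(x)=h_{\AAA(A_1\oplus A_m)}(x)$.
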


    On the other hand, one can modify the context of Corollary \ref{cor:identity3} slightly: let $P = C_n\oplus A_k$, and $P_i = A_i\oplus A_k$ for $1\leq i \leq n$. This gives the following.

    \begin{cor}\label{cor:identity6}
        For all $n,k$,
        \[ N_{n-1,k}(x) = \dfrac{1}{n!}\sum_{w\in \SSS_n} B_w(x)E_{\ell_w, k}(x). \]
    \end{cor}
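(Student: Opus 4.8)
The plan is to obtain this identity as a direct specialization of Theorem~\ref{thm:recurrence}, exactly in the spirit of the derivation of Corollary~\ref{cor:identity3}, using the substitution suggested above: set $P = C_n \oplus A_k$ and, for $1 \le i \le n$, set $P_i = A_i \oplus A_k$. First I would check the hypotheses. Take $S = C_n$ to be the bottom chain of $P = C_n \oplus A_k$. In the ordinal sum every element of $A_k = P - S$ lies strictly above every element of the chain $S$, and no element of $A_k$ lies below any element of $S$; hence for any $x, y \in S$ and any $z \in P - S$ we have $x \preceq z \Leftrightarrow y \preceq z$ (both hold) and $z \preceq x \Leftrightarrow z \preceq y$ (both fail), so $S$ is autonomous. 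It is proper as long as $k \ge 1$, and $P$ is connected for $n,k \ge 1$. Replacing the chain $S = C_n$ by an antichain of size $i$ turns $C_n \oplus A_k$ into $A_i \oplus A_k$, so the resulting posets are precisely the $P_i$ above; thus Theorem~\ref{thm:recurrence} applies and gives $h_P(x) = \frac{1}{n!}\sum_{w \in \SSS_n} B_w(x)\, h_{P_{\ell_w}}(x)$.

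It then remains to name the two kinds of $h$-polynomials. By Sack's identification quoted in this section, $E_{m,n}(x)$ is the $h$-polynomial of $\AAA(A_m \oplus A_n)$, so $h_{P_i}(x) = E_{i,k}(x)$ and in particular $h_{P_{\ell_w}}(x) = E_{\ell_w,k}(x)$. For the left-hand side, \cite[Theorem 4.8]{nguyen2023stack} states that the broom poset $A_{m,k} = C_{m+1} \oplus A_k$ has $h$-polynomial $N_{m,k}(x)$; taking $m = n-1$ gives $h_P(x) = h_{C_n \oplus A_k}(x) = N_{n-1,k}(x)$. Substituting both into the displayed recurrence produces $N_{n-1,k}(x) = \frac{1}{n!}\sum_{w \in \SSS_n} B_w(x)\, E_{\ell_w,k}(x)$, which is the claim.

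Since the corollary is a clean specialization of the main theorem, there is no substantive obstacle; the only point requiring care is the index bookkeeping on the left-hand side, where the broom-poset result is stated for $C_{m+1} \oplus A_k$, so that $C_n \oplus A_k$ must be matched with $N_{n-1,k}$ rather than $N_{n,k}$. The derivation is otherwise entirely parallel to that of Corollary~\ref{cor:identity3}, the difference being that the intermediate layer $A_1$ present there (which yields $P_i = A_i \oplus A_1 \oplus A_k$ and the argument $E_{\ell_w + k}$) is absent here, so the replaced chain is the \emph{entire} bottom chain $C_n$ and the inner factor $A_k$ is carried along unchanged into $E_{\ell_w,k}$.
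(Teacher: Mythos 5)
Your proposal is correct and takes essentially the same route as the paper, which obtains Corollary~\ref{cor:identity6} by applying Theorem~\ref{thm:recurrence} with $P = C_n \oplus A_k$ and $P_i = A_i \oplus A_k$, identifying $h_P(x) = N_{n-1,k}(x)$ via the broom-poset result and $h_{P_i}(x) = E_{i,k}(x)$ via Sack's identification. Your explicit checks of autonomy of $S = C_n$ and of the index shift ($C_n \oplus A_k = C_{(n-1)+1}\oplus A_k$, hence $N_{n-1,k}$) merely spell out what the paper leaves implicit in its one-line derivation.
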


    Similar to Corollary \ref{cor:identity4}, we also have the following.

    \begin{cor}\label{cor:identity7}
        For all $n,k$,
        \[ N_{n+k-1}(x) = \dfrac{1}{n!k!}\sum_{\pi\in \SSS_n, \sigma\in\SSS_k} B_\pi(x)B_\sigma(x)E_{\ell_\pi,\ell_\sigma}(x). \]
    \end{cor}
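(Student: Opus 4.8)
The plan is to derive Corollary~\ref{cor:identity7} by composing two previously established identities, in exact parallel with the derivation of Corollary~\ref{cor:identity4} (which fed Corollary~\ref{cor:identity3} into Corollary~\ref{cor:identity2}). For the present statement the two ingredients are Corollary~\ref{cor:identity2}, applied with its first index lowered by one, and Corollary~\ref{cor:identity6}.

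First I would rewrite the left-hand side as $N_{n+k-1}(x)=N_{(n-1)+k}(x)$ and invoke Corollary~\ref{cor:identity2} with $n$ replaced by $n-1$ (permissible since that identity holds for all values of the first index), obtaining
\[ N_{n+k-1}(x)=\frac{1}{k!}\sum_{\sigma\in\SSS_k}B_\sigma(x)\,N_{n-1,\ell_\sigma}(x). \]
Then I would expand each term $N_{n-1,\ell_\sigma}(x)$ via Corollary~\ref{cor:identity6}, specializing its free second index to $\ell_\sigma$:
\[ N_{n-1,\ell_\sigma}(x)=\frac{1}{n!}\sum_{\pi\in\SSS_n}B_\pi(x)\,E_{\ell_\pi,\ell_\sigma}(x). \]
This specialization is legitimate because every permutation has at least one cycle, so $\ell_\pi,\ell_\sigma\ge 1$ and the two-parameter Eulerian polynomial $E_{\ell_\pi,\ell_\sigma}(x)$ is defined. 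Substituting the second display into the first and merging the two summations into a single sum over $\SSS_n\times\SSS_k$ gives
\[ N_{n+k-1}(x)=\frac{1}{n!\,k!}\sum_{\pi\in\SSS_n,\ \sigma\in\SSS_k}B_\pi(x)B_\sigma(x)\,E_{\ell_\pi,\ell_\sigma}(x), \]
which is the desired identity.

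Because both ingredient corollaries are already in hand, the argument is purely formal and I do not anticipate a genuine obstacle; the only point requiring care is the index bookkeeping --- specifically the shift $n\mapsto n-1$ when applying Corollary~\ref{cor:identity2} and the substitution of the outer cycle count $\ell_\sigma$ for the free index of Corollary~\ref{cor:identity6}. It is worth noting that the order of composition matters: one must first peel off the $\SSS_k$-layer using Corollary~\ref{cor:identity2} and only then apply Corollary~\ref{cor:identity6}, since there is no identity expressing $E_{m,k}(x)$ directly as a sum of $N$-polynomials with which to run the composition in the reverse order.
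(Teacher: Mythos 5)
Your proposal is correct and is exactly the paper's route: the paper derives Corollary~\ref{cor:identity7} ``similar to Corollary~\ref{cor:identity4}'', i.e.\ by writing $N_{n+k-1}=N_{(n-1)+k}$, applying Corollary~\ref{cor:identity2} with the first index shifted to $n-1$, and then expanding each $N_{n-1,\ell_\sigma}(x)$ via Corollary~\ref{cor:identity6}. Your index bookkeeping, including the substitution $k\mapsto\ell_\sigma$ in Corollary~\ref{cor:identity6}, matches the intended argument.
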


    Now we focus on a special case. Let $P = C_{n+1}\oplus A_k$, $P_1 = C_n\oplus A_k$ and $P_2$ be the \textit{two-leg broom poset} $A_2\oplus C_{n-1}\oplus A_k$. Then, we have
    \[ 2h_P(x) - (1+x)h_{P_1}(x) = h_{P_2}(x), \]
    or
    \[ 2N_{n,k}(x) - (1+x)N_{n-1,k}(x) = h_{P_2}(x). \]
    Recall that $h_P(x)$ and $h_{P_1}(x)$ count descents in $s^{-1}(\SSS_{n,k})$ and $s^{-1}(\SSS_{n-1,k})$, respectively. Thus, we have the following proposition.

    \begin{prop}
        The $h$-polynomial of $\AAA(A_2\oplus C_{n-1}\oplus A_k)$ counts descents in
        \[ \{w\in s^{-1}(\SSS_{n+1,k})~|~w_1\leq n+k-1, w_{n+k+1}\geq n+k\}. \]
    \end{prop}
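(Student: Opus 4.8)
The plan is to combine the polynomial identity already in hand with the stack-sorting interpretation of the broom $h$-polynomials. Applying the $n=2$ instance of Theorem~\ref{thm:recurrence}, with $S$ taken to be the bottom two elements of the chain inside $P=C_{n+1}\oplus A_k$, gives $h_{P_2}(x)=2N_{n,k}(x)-(1+x)N_{n-1,k}(x)$, where $P_2=A_2\oplus C_{n-1}\oplus A_k$. Writing $M:=n+k+1$, $T_m:=s^{-1}(\SSS_{m,k})$, and using $N_{m,k}(x)=\sum_{w\in T_m}x^{\des(w)}$, it remains to prove that the set $B=\{w\in s^{-1}(\SSS_{n+1,k})\mid w_1\le M-2,\ w_M\ge M-1\}$ satisfies $\sum_{w\in B}x^{\des(w)}=2N_{n,k}(x)-(1+x)N_{n-1,k}(x)$. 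I would prove this by partitioning $B$ according to its last letter $w_M$, which the defining inequality forces into $\{M-1,M\}$, and matching each block with an explicit subset of $T_n$.

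The block $w_M=M$ is straightforward. Here $w=w'M$ with $w'\in\SSS_{M-1}$, and since $M$ is the largest letter placed last, $s(w)=s(w')\,M$; thus $w\in s^{-1}(\SSS_{n+1,k})$ exactly when $w'\in T_n$, while $\des(w)=\des(w')$ and the condition $w_1\le M-2$ removes precisely those $w'$ with $w'_1=n+k$ (the maximum of $w'$). Writing such a $w'$ as $(n+k)\,w''$ and using $s((n+k)\,w'')=s(w'')\,(n+k)$ identifies them with $T_{n-1}$ while adding one descent, so they contribute $x\,N_{n-1,k}(x)$. Hence this block contributes $N_{n,k}(x)-x\,N_{n-1,k}(x)$.

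The block $w_M=M-1$ is the crux. Now $M-1$ occupies the last position and $w_1\le M-2$ forces the maximum $M$ into an interior position, so $w=L\,M\,R$ with $L$ nonempty and $R$ ending in $M-1$. The decomposition $s(L\,M\,R)=s(L)\,s(R)\,M$ shows that $w\in s^{-1}(\SSS_{n+1,k})$ iff $s(L)\,s(R)=\pi\,(k+1)\cdots(M-1)$ for some $\pi\in\SSS_k$, i.e.\ iff $s(L)\,s(R)$ lies in $\SSS_{n,k}$ and ends in its maximum. I would show this block contributes $N_{n,k}(x)-N_{n-1,k}(x)$. A convenient comparison target is the complement of the append-map image inside $T_n$: those $w'\in T_n$ with $w'_{n+k}=n+k$ biject descent-preservingly with $T_{n-1}$ by deleting the trailing maximum, whence $\sum_{w'\in T_n,\ w'_{n+k}\ne n+k}x^{\des(w')}=N_{n,k}(x)-N_{n-1,k}(x)$. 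The task is therefore to build a descent-preserving bijection between the block $\{w=LMR\}$ above and $\{w'\in T_n\mid w'_{n+k}\ne n+k\}$; since both sides carry a common \emph{interior-maximum} shape $u=A\,(\max)\,B$ with $\des(u)=\des(A)+1+\des(B)$, the bijection should be engineered to match the two factors $s(L),s(R)$ with the corresponding data on the $T_n$ side.

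Summing the two blocks yields $2N_{n,k}(x)-(1+x)N_{n-1,k}(x)$, as required. The main obstacle is the second block: although the total is pinned down by the already-proved polynomial identity, giving an honest combinatorial proof that it equals $N_{n,k}-N_{n-1,k}$ means tracking the descent statistic through the recursion $s(LMR)=s(L)\,s(R)\,M$, where the difficulty is that $s$ is far from injective on individual fibers and $\des(L),\des(R)$ are not determined by $s(L),s(R)$ alone. I expect the cleanest route is to pass to the \emph{decreasing sorting-tree} model of stack-sorting, in which a preimage with an interior maximum corresponds to a root with two subtrees, and to realize the desired bijection as a local surgery at the root that preserves the number of descents; verifying that this surgery respects the $\SSS_{n,k}$-shape condition on both sides is the step I would spend the most care on.
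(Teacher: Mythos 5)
Your overall route coincides with the paper's: derive $h_{P_2}(x)=2N_{n,k}(x)-(1+x)N_{n-1,k}(x)$ from the $n=2$ case of Theorem~\ref{thm:recurrence}, then split the set by the last letter $w_{n+k+1}\in\{n+k,\,n+k+1\}$. Your first block is exactly the paper's computation of $H_1(x)=N_{n,k}(x)-xN_{n-1,k}(x)$, with the same strip-the-trailing-maximum argument, and it is correct. But the second block, which you rightly call the crux, is where the proposal stops being a proof: you assert its generating function is $N_{n,k}(x)-N_{n-1,k}(x)$ and propose to certify this by an unconstructed descent-preserving bijection onto $\{w'\in s^{-1}(\SSS_{n,k})\mid w'_{n+k}\neq n+k\}$, realized as ``local surgery at the root'' of a sorting tree, with the key verification explicitly deferred. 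Note also that nothing is ``pinned down by the already-proved polynomial identity'': that identity gives the value of $h_{P_2}$, while the descent generating function of the set is precisely what the proposition claims equals it, so the second block cannot be inferred from the first block plus the identity; it needs its own argument, and none is given. This is a genuine gap.

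The gap closes with a map far simpler than tree surgery, and it is what the paper's terse ``the same argument applies \dots with the caveat'' means: delete the final letter $n+k$ and relabel $n+k+1\mapsto n+k$; in your notation, writing $R=R'\,(M-1)$, the map is $L\,M\,R'\,(M-1)\mapsto L\,(M-1)\,R'$. Since $s(L\,M\,R'\,(M-1))=s(L)\,s(R')\,(M-1)\,M$ and $s(L\,(M-1)\,R')=s(L)\,s(R')\,(M-1)$, membership in the two fibers corresponds, and the map is a bijection onto $\{w'\in s^{-1}(\SSS_{n,k})\mid w'_1\le n+k-1\}$ --- the same index set as in your first block, not your target $\{w'_{n+k}\neq n+k\}$. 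It preserves descents except when $R'=\emptyset$, i.e.\ $w_{n+k}=n+k+1$, equivalently $w'$ ends in its maximum, where the final descent $M\to M-1$ is lost. Hence the block's generating function is
\[ \sum_{\substack{w'\in s^{-1}(\SSS_{n,k}) \\ w'_1<n+k}} x^{\des(w')+[w'_{n+k}=n+k]} = \bigl(N_{n,k}(x)-xN_{n-1,k}(x)-N_{n-1,k}(x)\bigr)+xN_{n-1,k}(x) = N_{n,k}(x)-N_{n-1,k}(x), \]
where $[\cdot]$ is $1$ if the condition holds and $0$ otherwise, using that the $w'$ ending in their maximum contribute $N_{n-1,k}(x)$ (delete the trailing maximum; the condition $w'_1<n+k$ is then automatic). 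So insisting on a \emph{descent-preserving} bijection onto $\{w'_{n+k}\neq n+k\}$ was a harder target than necessary: the natural map is weight-twisted on one sub-block, and the algebra closes anyway --- this twist is exactly the paper's $(x+1-x)$ bookkeeping in $H_2$. (Two small touch-ups to your block-2 setup: for elements of $\SSS_{n,k}$ the clause ``ends in its maximum'' is automatic, and $\des(w)=\des(L)+\des(R')+1$ holds uniformly, including when $R'=\emptyset$.)
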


    \begin{proof}
        Let $H_1(x)$ be the descent generating function of $\{w\in s^{-1}(\SSS_{n+1,k})~|~w_1\leq n+k-1, w_{n+k+1} = n+k+1\}$ and $H_2(x)$ be that of $\{w\in s^{-1}(\SSS_{n+1,k})~|~w_1\leq n+k-1, w_{n+k+1}= n+k\}$.

        Observe that counting descents in $\{w\in s^{-1}(\SSS_{n+1,k})~|~w_1\leq n+k-1, w_{n+k+1} = n+k+1\}$ is the same as in $\{w\in s^{-1}(\SSS_{n,k})~|~w_1< n+k\}$. Counting descents in $\{w\in s^{-1}(\SSS_{n,k})~|~w_1= n+k\}$ is the same as in $s^{-1}(\SSS_{n-1,k})$ with an extra descent at the beginning. Hence,
        \[ H_1(x) = N_{n,k}(x) - xN_{n-1,k}(x). \]
        The same argument applies for $\{w\in s^{-1}(\SSS_{n+1,k})~|~w_1\leq n+k-1, w_{n+k+1}= n+k\}$ with the caveat that when $w_{n+k} = n+k+1$, there is an extra descent at the end. Thus,
        \[ H_2(x) = N_{n,k}(x) - (x+1-x)N_{n-1,k}(x). \]
        Then,
        \[ H_1(x) + H_2(x) = 2N_{n,k}(x) - (1+x)N_{n-1,k}(x) = h_{\AAA(A_2\oplus C_{n-1}\oplus A_k)}(x). \]
    \end{proof}

    \begin{remark}
        The set $\{w\in s^{-1}(\SSS_{n+1,k})~|~w_1\leq n+k-1, w_{n+k+1}\geq n+k\}$ bears resemblance to the definition of $E_{n+k-1,2}(x)$. Thus, one may hope for a combinatorial formula for the $h$-polynomial of $\AAA(A_j\oplus C_n\oplus A_k)$ that interpolates between $N_{n,k}(x)$ and $E_{j,k}(x)$.
    \end{remark}

\bibliography{bibliography}
\bibliographystyle{alpha}

\end{document}